\journal{Journal of Computational Physics}
\newcommand{\swlSide}[1]{\xspace}
\newcommand{\revision}[2][]{{#2}} % Optional first arg to keep old text in the raw tex, but not render
\newcommand{\revisionB}[2][]{{#2}}
\newcommand{\revisionC}[2][]{{#2}}
\algnewcommand{\algorithmicSwitch}{\textbf{switch}}
\algnewcommand{\algorithmicEndSwitch}{\textbf{end switch}}
\algnewcommand{\algorithmicCase}{\textbf{Case}}
\algnewcommand{\algorithmicEndCase}{}
\newtheorem{theorem}{Theorem}
\newtheorem{remark}{Remark}
\newcommand{\Schrodinger}{Schr\"odinger\xspace}
\DeclareMathAlphabet{\mathpzc}{OT1}{pzc}{m}{it}
\DeclareMathOperator{\diag}{diag}
\newcommand{\ctrlA}{{\mathpzc c}}
\newcommand{\jfunc}{{\mathfrak j}}
\newcommand{\NTime}{{N_{\rm T}}}
\newcommand{\NCtrl}{{N_{\rm C}}}
\newcommand{\NPar}{{N_{\rm P}}}
\newcommand{\cost}{{\rm cost}}
\newcommand{\lagP}{\mathfrak{L}}
\newcommand{\Jcal}{\mathcal{J}}
\newcommand{\Lcal}{\mathcal{L}}
\newcommand{\Ocal}{\mathcal{O}}
\newcommand{\ab}{{\mathbf a}}
\newcommand{\bb}{{\mathbf b}}
\newcommand{\eb}{{\mathbf e}}
\newcommand{\yb}{{\mathbf y}}
\newcommand{\ub}{{\mathbf u}}
\newcommand{\vb}{{\mathbf v}}
\newcommand{\wb}{{\mathbf w}}
\newcommand{\re}{{\rm Re}\,}
\newcommand{\im}{{\rm Im}\,}
\newcommand{\lambdab}{\boldsymbol{\lambda}}
\newcommand{\psib}{\boldsymbol{\psi}}
\newcommand{\thetab}{\boldsymbol{\theta}}
\newcommand{\zerob}{\boldsymbol{0}}
\newcommand{\ba}{\begin{array}}
\newcommand{\ea}{\end{array}}
\newcommand{\be}{\begin{equation}}
\newcommand{\ee}{\end{equation}}
\newcommand{\bd}{\begin{displaymath}}
\newcommand{\ed}{\end{displaymath}}
\newcommand{\xb}{\mathbf{x}}
\newcommand{\C}{\mathbb{C}}
\newcommand{\R}{\mathbb{R}}
\newcommand*{\bmat}[1]{\begin{bmatrix} #1 \end{bmatrix}}
\newcommand{\lpar}{\left(}
\newcommand{\rpar}{\right)}
\newcommand{\stepDeriv}[3]{#1_{#2}^{(#3)}}
\newcommand{\thisCiteNeeded}[1]{{\color{red} $\bigoasterisk$}}
\newcommand{\ket}[1]{|#1\rangle}
\newcommand{\bra}[1]{\langle#1|}
\newcommand{\innerprod}[2]{\left\langle #1, #2 \right\rangle}
\newcommand\norm[1]{\left\lVert#1\right\rVert}
\DeclareMathOperator{\tr}{Tr}
\newcommand{\dv}[3][]{\frac{d^{#1}#2}{d{#3}^{#1}}}
\newcommand{\inlinedv}[3][]{{d^{#1}#2}/{d{#3}^{#1}}}
\newcommand{\pdv}[2]{\frac{\partial #1}{\partial #2}}
\newcommand{\inlinepdv}[2]{{\partial #1}/{\partial #2}}
\newcommand{\target}{\text{target}}
\newcommand{\trace}[1]{\tr \left[ #1 \right]}
\newcommand{\Fidelity}{\mathcal{F}}
\newcommand{\Infidelity}{\mathcal{I}}
\newcommand{\GuardPenalty}{\mathcal{G}}
\begin{document}

\begin{frontmatter}

%% Title, authors and addresses

%% use the tnoteref command within \title for footnotes;
%% use the tnotetext command for theassociated footnote;
%% use the fnref command within \author or \address for footnotes;
%% use the fntext command for theassociated footnote;
%% use the corref command within \author for corresponding author footnotes;
%% use the cortext command for theassociated footnote;
%% use the ead command for the email address,
%% and the form \ead[url] for the home page:

%\title{Fast and Exact Gradient Computation in Open-Loop Quantum Optimal Control using Discrete Adjoint of High-Order Hermite Interpolation Method}
\title{High-Order Hermite Optimization: Fast and Exact Gradient Computation in Open-Loop Quantum Optimal Control using a Discrete Adjoint Approach}
\author{Spencer Lee\corref{cor1}\fnref{label1}}
\ead{leespen1@msu.edu}
%\ead[url]{home page}
%\fntext[label1]{Supported in part }
\cortext[cor1]{Corresponding author}
\affiliation[label1]{organization={Michigan State University, Department of Computational Mathematics, Science, and Engineering \& Department of Physics and Astronomy},%Department and Organization
            %addressline={}, 
            %city={},
            %postcode={}, 
            %state={},
            %country={}
}

\author[]{Daniel Appel\"{o}\fnref{label2}}
\ead{Appelo@vt.edu}
\affiliation[label2]{organization={Department of Mathematics, Virginia Tech},
            addressline={},
            city={Blacksburg},
            postcode={24061},
            state={VA},
            country={USA}}
             
%\fntext[label2]{Research supported by DOE Office of Advanced Scientific Computing Research under the Advanced Research in Quantum Computing program, Award Number DE-SC0025424, NSF DMS-2208164, and Virginia Tech.}

\begin{abstract}
This work introduces the \emph{High-Order Hermite Optimization} (HOHO) method, an open-loop discrete adjoint method for quantum optimal control. Our method is the first of its kind to efficiently compute exact (discrete) gradients when using continuous, parameterized control pulses while solving the forward equations (e.g. \Schrodinger's equation or the Linblad master equation) with an \emph{arbitrarily high-order} Hermite Runge-Kutta method. The HOHO method is implemented in \emph{QuantumGateDesign.jl}, an open-source software package for the Julia programming language, which we use to perform numerical experiments comparing the method to Juqbox.jl. For realistic model problems we observe speedups up to 775x.
\end{abstract}

%%Graphical abstract
%\begin{graphicalabstract}
%\includegraphics{grabs}
%\end{graphicalabstract}

%%Research highlights
%\begin{highlights}
%\item Research highlight 1
%\item Research highlight 2
%\end{highlights}

%\begin{keyword}
%%% keywords here, in the form: keyword \sep keyword
%
%%% PACS codes here, in the form: \PACS code \sep code
%
%%% MSC codes here, in the form: \MSC code \sep code
%%% or \MSC[2008] code \sep code (2000 is the default)
%
%\end{keyword}

\end{frontmatter}

{ \hypersetup{hidelinks} }

%-SECTION-------------------------------
\section{Introduction}

%!TEX root = ./main.tex
Quantum optimal control (QOC) is the theory and practice of manipulating quantum mechanical systems. Quantum technologies, including quantum computing, quantum information processing,  quantum simulation, and quantum sensing, depend on the ability to precisely manipulate quantum systems. We focus on the application of QOC to quantum computing algorithms. Such quantum computing algorithms are implemented by applying a series of quantum gates consisting of unitary transformations of a quantum system over a fixed time duration.

A quantum gate is realized in hardware by varying the amplitude of control pulses in time: microwave pulses in the case of superconducting circuits \cite{Krantz_2019,raftery2017directdigitalsynthesismicrowave}, and laser pulses in the case of trapped ions \cite{Sch_fer_2018} or neutral atoms \cite{Levine_2018,Theis_2016}. These control pulses may be generated experimentally using arbitrary waveform generators (AWGs) for microwaves, or acoustic/electro-optic modulators for lasers. In this work we will focus on superconducting circuits, although the methods we describe may be applied to a variety of quantum systems.

For certain quantum gates operating on small, simple systems, analytic control pulses can be found. For larger, more practically useful systems, numerical optimization is used to find control pulses which implement a desired quantum gate. Gate fidelity is measured by an objective function and the control pulses are parameterized by a set of control parameters. 

QOC methods may be open-loop, in which case the quantum system is simulated numerically, or closed-loop, in which case feedback from real experiments is used. For the problem of gate design, closed-loop typically means applying control pulses and measuring the performance of the gate at the end of the pulse application (e.g. by randomized benchmarking \cite{Magesan_2012} or quantum state tomography \cite{QuantumStateTomography}). In general, open-loop methods excel at accurately computing gradients of the objective function with a cost that does not scale with the number of control parameters, whereas closed-loop methods excel at correctly characterizing the behavior of the gate by using experimental results instead of simulated results based on potentially inaccurate theoretical models, and at scaling with the number of qubits. Control pulses found using open-loop control may be further tuned using closed-loop control, so even when closed-loop control is necessary for the final implementation of a gate, open-loop control is still a valuable tool.

The \emph{adjoint method} is a technique for efficiently calculating the gradient of an objective functional that depends on the solution of a differential equation \cite{Givoli_2021_adjoint}. By ``efficiently,'' we mean in particular that the computational cost comes mainly from solving two initial value problems (the forward and adjoint equations), and does not depend significantly on the number of control parameters. Within this framework, there are \emph{continuous adjoint} methods, in which the adjoint equations are derived from the continuous forward equations (optimize then discretize), and \emph{discrete adjoint} methods, in which the adjoint equations are derived from the discretization of the forward equations by a numerical method (discretize then optimize), so that the gradients computed are exact with respect to the numerical method used. This is especially important when optimizing the objective functional using quasi-Newton methods such as L-BFGS \cite{nocedal2006numerical} \revision[because even modest errors in the gradient may ruin the Hessian approximation, so that the advantage of using curvature information is lost]{because moderate errors in the gradient can obscure the approximate Hessians used by quasi-Newton methods \cite{de_Fouquieres_2011}, which results in the advantage of using curvature information being lost}.

The primary result of this work is the introduction of the \emph{High-Order Hermite Optimization} (HOHO) method for QOC, an open-loop discrete adjoint method which is the first of its kind to efficiently compute exact (discrete) gradients when using continuous, parameterized control pulses while solving the forward equations (e.g. \Schrodinger's equation or the Linblad master equation) with an \emph{arbitrarily high-order} numerical method. Namely, we solve the forward equations using Hermite Runge-Kutta methods, and use their relatively simple form when applied to systems of ODEs to derive a procedure that works for every order version of the method. Additionally, we present techniques that we use to make the method more computationally efficient for the specific models and parameterizations found in QOC. The HOHO method is implemented in \emph{QuantumGateDesign.jl} \cite{Lee_QuantumGateDesign_jl_2025}, an open-source software package for the Julia programming language \cite{Julia_2017}.

The most ubiquitous methods for open-loop QOC are
\emph{Gradient Ascent Pulse Engineering} (GRAPE) \cite{KHANEJA2005296, de_Fouquieres_2011} and Krotov's method \cite{Eitan_2011}. These methods rely on the assumption that the control pulses are piecewise-constant, which allows the forward equations for be solved using matrix exponentiation for each time interval over which the control pulses are constant. Consequently, the discretized forward equations are analytically simple, which makes it easy to derive exact expressions for partial derivatives of the objective function. GRAPE is a discrete adjoint method, and updates all the control parameters at once using the gradient, while Krotov's method updates only a single control parameter at a time, with the advantage that the update scheme is guaranteed to converge monotonically.

The piecewise-constant control pulse assumption has several drawbacks. First, the number of control parameters is exactly the number of timesteps taken, so that longer or more ``expressive'' gates result in a more challenging optimization problem in addition to a more challenging numerical simulation problem. Second, because each control pulse amplitude corresponds to a different control parameter, the amplitude may vary significantly between time intervals, resulting in ``high-bandwidth'' control pulses that are difficult to  realize experimentally on an AWG. Third, matrix exponentiation scales poorly with system size, compared to other numerical timestepping methods.

The first and second drawbacks of the piecewise-constant control pulse assumption have essentially been solved by the \emph{Gradient Ascent in Function Space} (GRAFS) method \cite{GRAFS}, which still assumes the control pulses are piecewise-constant, but parameterizes them in terms of a set of basis functions to enable high-resolution control pulses while drastically reducing the number of control parameters. GRAFS uses a modified version of the gradient computation procedure used in GRAPE to keep the gradient computation efficient. To comply with experimental constraints, Slepian sequences \cite{SlepianSequences} are used to synthesize bandwidth-limited controls.

Before GRAFS, The idea of parameterizing the control pulses in terms of basis functions was also explored in the \emph{Chopped Random Basis} (CRAB) method \cite{CRAB_Caneva_2011}, which uses smooth, parameterized control pulses. But as a tradeoff it lacks a way to compute the gradient (except by finite-difference approximation), and instead uses \revision[a direct search]{a derivative-free (zeroth-order) method \cite{DerivativeFreeOptimization}} to optimize the objective function. The \emph{Gradient Optimization of Analytic Controls} (GOAT) method \cite{GOAT_Machnes_2018} also uses smooth, parameterized control pulses, but performs a gradient-based search by differentiating the forward equation with respect to each control parameter (forward-mode differentiation). The GOAT method produces exact gradients, but must solve an initial value problem for every control parameter, so that the gradient computation is not efficient. We also mention the \emph{Pad\'e Integrator COllocation method} (PICO) \cite{trowbridge2023directcollocationquantumoptimal} which optimizes over the quantum states directly (a direct method), implementing \Schrodinger's equation as a constraint in the optimization instead of having it enforced implicitly by a numerical timestepping scheme.

Recently, advances have been made in using discrete adjoint methods with smooth, parameterized control pulses to efficiently compute the gradient when using second-order methods to solve the forward equations. Respectively, Juqbox.jl \cite{petersson2022optimal} does QOC for closed quantum systems using St\"ormer-Verlet as the timestepping method,  and Quandary \cite{gunther2021quandaryopensourcecpackage} does QOC for open quantum systems using the implicit midpoint method as the timestepping method.

In principle, efficient gradient computation can be achieved for any method of solving the forward equations by using reverse-mode automatic differentiation (AD) \cite{Griewank2008_AutomaticDifferentiationPrinciples, Margossian2019_AutomaticDifferentiationReview}. Indeed, AD has been applied in the field of QOC \cite{Nelson2017_QOC_AD_GPU, Abdelhafez2019_QOC_AD_QuantumTrajectories, Schafer2020_QOC_AD}, and works similarly to the discrete adjoint method. The main difference between the two methods is that the discrete adjoint method results in clear, analytic forward and adjoint equations to be solved, which allows for the development of more efficient algorithms and software than is typically implemented by a pure AD approach. 

In particular, reverse-mode AD must store the expression graph and the values
of intermediate variables in that graph. This results in memory storage that
scales \revision{with} the system size and with the number of matrix-vector multiplications (which scales with the number of timesteps taken, and hence also with the stiffness of the system) and makes optimal control of large quantum systems, especially open quantum
systems, infeasible. In contrast, discrete adjoint methods are often able to store intermediate variables in a small number of pre-allocated arrays, drastically reducing the total memory required. This also drastically reduces the number of large (the size of the system) dynamic memory allocations required, which can have a significant impact on performance when solving sparse differential equations (which typically appear in QOC), since the costs of allocating memory and iteratively solving a linear system of equations (e.g. in an implicit method) at each timestep are both $\mathcal{O}(N)$, where $N$ is the size of the \revision[system.]{complex-valued matrix system. For a system of $N_Q$ qubits, we have $N = 2^{N_Q}$.}  \revision[Additionally, the discrete adjoint method differentiates each discretized forward equation to produce a corresponding adjoint equation. This allows each both the forward and adjoint equations to be solved iteratively. In contrast, in most software implementations of AD will differentiate through each step of the iterative solver, resulting in increased memory usage and unnecessary computations compared to the discrete adjoint method.]{\par 

When the solution method for the forward equations involves solving linear systems, for example when using an implicit timestep method, the discrete adjoint method supports using iterative linear solvers such as GMRES \cite{GMRES} more efficiently than AD does. In particular, when solving the forward equations using an iterative linear solver, naive implementations of AD will directly differentiate through the low-level implementation of the solver \cite{margossian2022efficientautomaticdifferentiationimplicit, hovland2024differentiatinglinearsolvers}, which results in increased memory usage and unnecessary computations.
}

Some of the shortcomings of AD are addressed by the semi-automatic
differentiation approach \cite{Goerz2022_SemiAutomaticDifferentiation}. In this
approach, the objective functional is rewritten in terms of intermediate
variables. By choosing the intermediate variables carefully, the computational
graph for can be kept small, reducing the total memory required compared to
automatic differentiation, while still keeping the flexibility in the choice of
the objective functional.

With the HOHO method, we aim to improve the state-of-the-art in discrete adjoint QOC methods using smooth, parameterized control pulses by using arbitrarily high-order numerical methods, enabling the optimal control of stiff quantum systems, which would be extremely computationally expensive to simulate using lower-order methods.

\revision{In principle, St\"ormer-Verlet and the implicit midpoint method can be made higher-order using compositional methods \cite{HairerGeometric, LLNLCompositionalReport}, which divide each timestep into a sequence of forward and backward steps. However, the number of stages in compositional methods grows rapidly, with the tenth-order method requiring 35 stages. Thus if the base timestep method requires solving a linear system of equations (which is true of both the implicit midpoint method and St\"ormer-Verlet when applied to \Schrodinger's equation), then the number of linear solves required to take a full timestep using the compositional method will be much larger than for HOHO. The linear solve is typically the most time consuming part of the solver. Furthermore, since the compositional methods requiring taking steps backward in time, it is unclear if they can be stably applied to problems with damping, which can occur in open quantum systems.}

The rest of the paper is organized as follows. In Section \ref{sec:problem_description}, we provide a mathematical description of quantum computing and quantum optimal control. In Section \ref{sec:numerical_methods}, we describe the Hermite Runge-Kutta methods we use in our numerical simulations, and derive a discrete adjoint method for efficiently computing the gradient of arbitrary objective functionals for arbitrarily high-order Hermite Runge-Kutta methods. This is the main novel result of this work. In Section \ref{sec:correctness_experiments}, we perform numerical experiments on a Rabi oscillator problem to verify the correctness of our method and its implementation. In Section \ref{sec:speedup_experiments}, we use our method to solve a gate design problem for a tripartite system, showcasing the speed of our method. Finally, in Section \ref{sec:conclusions}, we provide concluding remarks and propose ideas for future work.

%-SECTION-------------------------------
\section{Problem Description} \label{sec:problem_description}
We now present a more precise description of quantum computing and QOC before detailing the HOHO method.
\subsection{Quantum Computing}
The state of a closed, quantum system is represented by a complex-valued state vector\footnote{In quantum computing literature, states are often represented using bra-ket notation, in which case the state $\psib \in \C^N$ corresponds to $\ket{\psi}$ in a particular basis. Note that $\ket{\psi}$ only represents a physical state: it does not choose a basis to use to represent that state. We switch between the two notations when convenient, but mostly stick to representing states as complex-valued vectors.} $\psib(t) \in \C^N$. Each observable (a quantity which can be physically measured) of the system is associated with a linear operator. The eigenvalues of the operator are the possible outcomes of the measurement, and the corresponding eigenvectors are the states for which that measurement will be observed.

Each entry of the state vector corresponds to an observable state, and the magnitude of the entry squared gives the probability to observe the corresponding state upon measurement. Consequently, the length of the state vector is $\| \psib(t) \|_2 = 1$ at all times because the probabilities of observing each possible outcome must sum to one. This also implies that the time evolution of the state vector is unitary. A state which has a nonzero magnitude for two or more states is said to be in a \emph{superposition} of those states. 

In this work we consider only closed quantum systems. In a quantum computing context, this means considering only time scales short enough that there is no significant interaction between the quantum computer and the environment.

\subsection{Governing Equation: \Schrodinger's Equation}

The time evolution of the state vector from an initial state $\psib_0$ in a closed quantum system is governed by \Schrodinger's equation\footnote{We always choose our units so that $\hbar=1$.}
\begin{equation} \label{eq:schrodinger}
    \frac{d}{dt}\psib(t) = -iH(t)\psib(t),\quad \psib(0) = \psib_0 \in \C^N,
\end{equation}
where $H(t) \in \C^{N \times N}$ is the Hamiltonian of the system. We assume that the Hamiltonian is controlled by a function parametrized by $\NPar$ values collected in the  control vector $\thetab \in \R^{\NPar}$. Accordingly, we write \Schrodinger's equation as
\begin{equation} \label{eq:schrodinger_with_parameters}
    \frac{d}{dt}\psib(t) = -iH(t; \thetab)\psib(t),\quad \psib(0) = \psib_0 \in \C^{N}.
\end{equation}
We make an additional assumption that the Hamiltonian can be decomposed into drift and control Hamiltonians
\begin{equation} \label{eq:drift_and_control_hamiltonians}
    H(t;\thetab) = H_d + H_c(t;\thetab),
\end{equation}
where the drift Hamiltonian governs the behavior of the system if no control is applied.

Quantum computing systems consists of several qubits whose individual states (when not entangled) are represented by a state vector $\psib_{qubit} \in \C^2$. In bra-ket notation, the standard basis elements of $\C^2$  are written as $\eb_1 = \ket{0}$ and $\eb_2 = \ket{1}$. The vector $\ket{0}$ is called the ground state of a qubit, and $\ket{1}$ is called the excited state of a qubit (or first excited state, if the qubit has more than two levels, in which case the qubit is called a \emph{qudit}).

A collection of \revision[$Q$]{$N_Q$} (possibly entangled) qubits is described by a state vector $\psib(t) \in \C^{2^{\revision[Q]{N_Q}}}$. The basis elements may be constructed using Kronecker products of the state vectors of each qubit. E.g. the state corresponding to a 2 qubit system with the first qubit in the ground state and the second qubit in the excited state is $\ket{01} = \eb_1 \otimes \eb_2 \in \C^{2^2}$. The standard ordering of the basis elements in bra-ket notation is $\ket{00}, \ket{01}, \ket{10}, \ket{11}$. A superposition of observable states is represented by a linear combination of the standard basis elements. For example $(\ket{0} + \ket{1})/\sqrt{2}$ corresponds to a qubit in a superposition of the ground and excited states. Upon measurement, there is a 50/50 chance of observing the system to be in either state.

Hamiltonians are often given in terms of raising/lowering operators. For a single $n$-level qudit, the lowering operator takes the form
\[
    a \coloneqq \begin{bmatrix}
    0 & \sqrt{1} &        &            \\
      & \ddots   & \ddots &            \\
      &          & \ddots & \sqrt{n-1} \\
      &          &        & 0          \\
    \end{bmatrix}
    \in \R^{n \times n},
\]
and the raising operator is  given by $a^\dagger$. For a system of $\revision[Q]{N_Q}$ qudits, with the $q$-th qudit having $n_q$ levels, the lowering operator of the subsystem corresponding to the $q$-th qudit is given by
\[
   a_q \coloneqq I_{n_{\revision[Q]{N_Q}}} \otimes \cdots \otimes I_{n_q+1} \otimes a_q \otimes I_{n_q-1} \otimes \cdots \otimes I_{n_1},
   \quad N = \prod_{q=1}^{\revision[Q]{N_Q}} n_q.
\]
As a representative example,  for a system of $\revision[Q]{N_Q}$ transmon qubits in the dispersive limit the drift Hamiltonian is
\begin{equation} \label{eq:dispersive_hamiltonian_model}
    H_d = \sum_{q=1}^{\revision[Q]{N_Q}} \left( 
    \omega_q a_q^\dagger a_q - \frac{\xi_q}{2}a_q^\dagger a_q^\dagger a_q a_q - \sum_{p > q} \xi_{pq} a_p^\dagger a_p a_q^\dagger a_q
    \right).
\end{equation}
Here $\omega_q$ and $\xi_q$ are the ground state transition frequency and self-Kerr coefficient, respectively, of the $q$-th qudit, and $\xi_{pq}$ is the cross-Kerr coefficient between the $p$-th and $q$-th qudits. The self-Kerr coefficients are typically (on IBM devices) an order of magnitude smaller than the ground state transition frequencies, and the cross-Kerr coefficients are typically 2--3 orders of magnitude smaller than the ground state transition frequencies.

\subsection{Optimal Control Problem}
We are interested in optimal control problems where the goal is to control a dynamic system over a time interval $t \in [0,T]$ to minimize some objective function, which may depend on the state of the dynamic system both at the final time $t=T$ and throughout the duration $t \in [0,T]$. The two most common problems in quantum optimal control are \emph{state-to-state transfer} and \emph{gate design} problems. In this paper we focus on gate design problems.

Algorithms for quantum computers are formulated in terms of quantum logic gates, which are unitary operators used to perform logical operations on subsystems of the quantum computer which consist of one or more qubit(s). Examples of single qubit gates include the Pauli-X/Y/Z and Hadamard gates \cite{Nielsen-Chuang}. Two qubit gates include the CNOT (controlled NOT) and SWAP gates. An example of a three qubit gate is the Toffoli gate. A gate is a unitary operator, so in any given basis the gate can be defined as a unitary matrix. Gate design is the problem of finding controls (parameterized by $\thetab$) which result in a Hamiltonian $H(t;\thetab)$ that implements the action of a desired gate in time $T$.

More precisely, we control the time evolution of $E$ state vectors\footnote{For $n$ qubits, $E = 2^n$. If each qubit is modeled as a pure two-level system, then $N=E$. If additional levels are modeled, then $N > E$.} that form a basis of the \revisionC[computational subspace]{possible initial states that the gate may be applied to}:
\begin{equation} \label{eq:schrodinger_gate}
    \frac{d}{dt}U(t) = -iH(t;\thetab)U(t),\quad U(0) = U_0 \in \C^{N \times E}.
\end{equation}
A state-to-state transfer problem is equivalent to a gate design problem where $E=1$.

The action of a gate is defined by its action on the elements of a basis, so to implement a gate operator it is sufficient to find $\thetab$ such that $U(T;\thetab)$ is sufficiently ``close'' to $U_\target \coloneqq G U_0 $, where $G$ is the matrix representation of the gate in the \revisionC[basis of the computational subspace]{computational basis}. We can quantify how close $U(T;\thetab)$ is to $U_\target$ using the \emph{trace fidelity}:\footnote{$\innerprod{A}{B}_F = \trace{A^\dagger B}$ denotes the Frobenius inner product of two matrices. In the more general context where $A$ and $B$ are operators not expressed in a particular basis, this is called the Hilbert-Schmidt inner product. The notation $\innerprod{\boldsymbol{a}}{\boldsymbol{b}}$, without a subscript, denotes the dot product (for real vectors) or Hermitian inner product (for complex vectors).}
\begin{equation} \label{eq:fidelity}
    \Fidelity(U(T;\thetab), U_\target) \coloneqq \frac{1}{E^2}\left| \innerprod{U(T;\thetab)}{ U_\target}_F\right|^2.
\end{equation}
This definition has several motivations. When $U(T;\thetab)$ and $U_\target$ are both unitary, we have $0 \leq \Fidelity \leq 1$. Specifically, $\Fidelity = 1$ when $U(T;\thetab)$ and $U_\target$ are equal, and $\Fidelity = 0$ when the columns of $U(T;\thetab)$ and $U_{\target}$ are orthogonal (i.e. $U(T;\thetab)$ transforms each initial state to a state that is orthogonal to its target state). We also have that $\Fidelity$ is convex with respect to $U(T; \thetab)$ and that the value of $\Fidelity$ is invariant to changes in global phase:
\begin{equation*}
    \Fidelity(U(T;\thetab), U_\target) = \Fidelity(e^{i\revision[\delta]{\phi}_1}U(T;\thetab),e^{i\revision[\delta]{\phi}_2}U_\target),\quad\ \forall \ \revision[\delta]{\phi}_1, \revision[\delta]{\phi}_2 \in \R.
\end{equation*}
This last property is physically important, as the global phase of a state is not physically observable, since we take the magnitude squared of the elements of a state vector when calculating the probabilities of measurement outcomes.

It is common for optimization software to expect an optimization problem to be formulated as a minimization problem, so instead of maximizing the fidelity, we often consider the equivalent problem of minimizing the \emph{trace infidelity}
\begin{equation} \label{eq:infidelity}
    \Infidelity(U(T;\thetab), U_\target) = 1 - \Fidelity(U(T;\thetab), U_\target).
\end{equation}

To illustrate the idea of gate design, we provide the example of implementing a Hadamard gate for a system described by the Hamiltonian
\begin{equation*}
    H(t) = \frac{\omega_0}{2}\sigma_z + c(t)\sigma_x,\quad \textrm{where }\sigma_z \coloneqq \bmat{1 & 0 \\ 0 & -1}, \quad \sigma_x \coloneqq \bmat{0 & 1 \\ 1 & 0}, \quad \omega_0 = 0.1 \textrm{ GHz},
\end{equation*}
by shaping the control pulse $c(t)$ to reach a gate infidelity less than 0.01\%. The resulting pulse and the evolution of the population of each state starting from each initial condition is shown in Figure \ref{fig:hadamard_example}.

\begin{figure}[htb!]
    \centering
    \includegraphics[width=6.25in, height=2.0in]{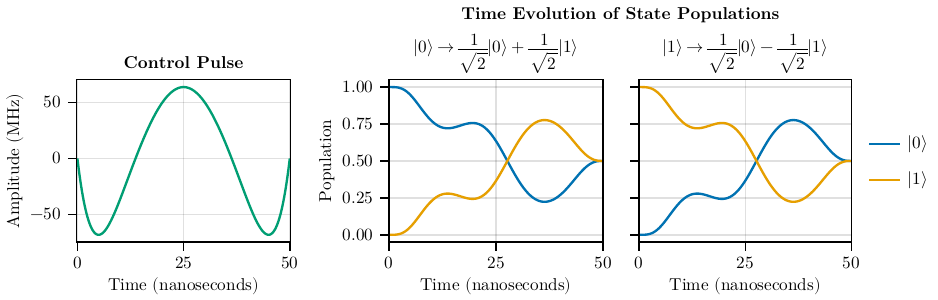} 
    \caption{Control pulse implementing the Hadamard gate for the Hamiltonian $H(t) = 0.5\omega_0 \sigma_z + c(t)\sigma_x$.}
    \label{fig:hadamard_example}
\end{figure}

The infidelity is not necessarily bounded below by 0 when a numerical method is used to approximate $U(T;\thetab)$. This is because the numerical approximation $\tilde{U}(T;\thetab) \approx U(T;\thetab)$ is not necessarily unitary. As an extreme example, if the inner product $\innerprod{U(T;\thetab)}{U_{\target}}_F$ is nonzero, the infidelity can become arbitrarily negative and large if the numerical approximation is $\tilde{U}(T;\thetab) = c U(T;\thetab)$, with $c \gg 1$. Consequently, an optimizer may try to minimize the infidelity \eqref{eq:infidelity} by increasing the spectral norm of $\tilde{U}(T;\thetab)$, which is a completely numerical and non-physical phenomenon. To combat this, we may instead use the \emph{generalized infidelity} \cite{petersson2024timeparallelmultipleshootingmethodlargescale}
\begin{equation} \label{eq:generalized_infidelity}
    \Infidelity_G(U(T;\thetab), U_\target) \coloneqq \frac{1}{E}\norm{U(T;\thetab)}_F^2 - \frac{1}{E^2} \left| \innerprod{U(T;\thetab)}{U_\target}_F \right|^2,
\end{equation} 
which equals the standard infidelity when $U(T;\thetab)$ is unitary, but is \revision[bounded between 0 and 1]{non-negative} even for non-unitary $U(T;\thetab)$. Increasing the spectral norm of $U(T;\thetab)$ increases $\norm{U(T;\thetab)}_F^2$ (because $\norm{A}_2 \leq \norm{A}_F$), which deters an optimizer from exploiting this non-physical phenomenon to minimize the infidelity.  

\revision{We emphasize that our main motivation for using the generalized infidelity is not to force numerical approximations of $U(T;\thetab)$ to be unitary even when the numerical approximation is inaccurate, but rather to avoid misleading objective function values that are near zero (or worse, negative) even though $U(T;\thetab)$ and $U_\textrm{target}$ are not closely aligned. This is especially useful for choosing stopping criterion based on the objective function value, for example choosing to stop the optimization once the (generalized) infidelity falls below a certain threshold, indicating that the gate has reached a desired level of accuracy. In general, the extent to which a numerical approximation of $U(t;\thetab)$ is unitary throughout $t \in [0,T]$ depends on the accuracy of the approximation. Later on in the numerical experiments section of this paper (Section \ref{sec:speedup_experiments}) we study this. The result of that experiment is displayed in Table \ref{tab:cnot3_unitarity}, which shows the convergence of the unitarity of the numerical approximation of $U(t;\thetab)$ as the accuracy of a numerical method increases. }

\begin{table}[htb!]
\centering
\begin{tabular}{rllllllllll}
\toprule 
    & \multicolumn{2}{c}{Order 2} & \multicolumn{2}{c}{Order 4} & \multicolumn{2}{c}{Order 6} & \multicolumn{2}{c}{Order 8} & \multicolumn{2}{c}{Order 10}\\
\cmidrule(lr){2-3} \cmidrule(lr){4-5} \cmidrule(lr){6-7} \cmidrule(lr){8-9} \cmidrule(lr){10-11} Steps & Err & Cvg & Err & Cvg & Err & Cvg & Err & Cvg & Err & Cvg  \\
\midrule 
%$2^{5}$ & 1.8(-1) & - & 3.1(-1) & - & 4.0 & - & 8.1(4) & - & 7.6(20) & - \\
%$2^{6}$ & 4.5(-2) & 2.0 & 1.2(-1) & 1.4 & 3.4(-2) & 6.9 & 5.2(-2) & 2.1(1) & 5.1(-2) & 7.4(1) \\
%$2^{7}$ & 1.1(-2) & 2.0 & 1.7(-2) & 2.8 & 9.2(-3) & 1.9 & 9.4(-2) & -8.4(-1) & 3.9(-2) & 3.7(-1) \\
%$2^{8}$ & 3.8(-3) & 1.6 & 9.8(-3) & 7.9(-1) & 1.5(-2) & -7.2(-1) & 8.2(-3) & 3.5 & 2.2(-3) & 4.2 \\
%$2^{9}$ & 1.4(-3) & 1.4 & 1.4(-2) & -5.3(-1) & 5.7(-3) & 1.4 & 8.7(-5) & 6.5 & 2.4(-6) & 9.8 \\
$2^{10}$ & 2.2(-2) & -4.0 & 6.3(-3) & 1.2 & 3.7(-5) & 7.3 & 4.0(-7) & 7.8 & 2.8(-9) & 9.8 \\
$2^{11}$ & 1.7(-2) & 3.6(-1) & 1.4(-4) & 5.5 & 6.1(-7) & 5.9 & 1.6(-9) & 7.9 & 2.8(-12) & 9.9 \\
$2^{12}$ & 8.3(-3) & 1.1 & 8.6(-6) & 4.1 & 9.7(-9) & 6.0 & 6.6(-12) & 8.0 & 1.9(-13) & 3.9 \\
$2^{13}$ & 1.4(-3) & 2.6 & 5.4(-7) & 4.0 & 1.5(-10) & 6.0 & 1.7(-13) & 5.3 & 1.8(-13) & 3.4(-2) \\
$2^{14}$ & 2.8(-4) & 2.3 & 3.4(-8) & 4.0 & 2.5(-12) & 5.9 & 2.2(-13) & -3.8(-1) & 2.0(-13) & -1.6(-1) \\
$2^{15}$ & 6.7(-5) & 2.1 & 2.1(-9) & 4.0 & 2.3(-12) & 1.1(-1) & 2.2(-12) & -3.3 & 2.1(-12) & -3.4 \\
\bottomrule 
\end{tabular}
    \caption{\revision{Error and convergence rates of the unitarity of the state, as measured by the discrete $L^2$ norm of $\frac{1}{\sqrt{E}}\|U(t;\thetab)\|_F$ across all timesteps, for the three-subsystem example when using the ``best'' control for implementing a CNOT gate in the two qudits plus resonator model (see Figure \ref{fig:cnot3_bestcontrol}), using the Hermite method with orders two through ten.}}
\label{tab:cnot3_unitarity}
\end{table}

We use either the trace infidelity or the generalized infidelity as the main component of our objective function, but we may add additional terms in order to achieve other desirable properties. In our case, we also add a term measuring the population of the ``guard'' states (which are not in the \revisionC[computational]{essential} subspace) throughout the duration of the gate:
\begin{equation} \label{eq:guard_penalty}
    \GuardPenalty(U(\cdot;\thetab)) = \frac{1}{T}\int_0^T \innerprod{U(t;\thetab)}{ WU(t;\thetab)}_F dt.
\end{equation}
Here $W$ is an $N \times N$ matrix whose null space is the \revisionC[computational]{essential} subspace. This is done to motivate the truncation of the Hilbert space, which for most quantum computing architectures has an infinite number of dimensions. By including the guard penalty \eqref{eq:guard_penalty} in our objective function, we prevent ``leakage'' of the state vector into the \revisionC[non-computational subspace]{non-essential (also called the guard subspace), which is not used for computation}. Physically, this means punishing the population of highly excited states. In our models, we typically include one or two guard levels for each qubit. Then, in the computational basis, $W$ is diagonal, with \revisionB[$W_{i,i} = 1$]{$W_{i,i} = 0$} when the $i$-th basis state is an essential state. For a non-essential state $j$, we have $W_{j,j} = w_j \neq 0$, where the weight $w_j$ can be adjusted to suppress the population of certain guard levels more than others.

\subsection{Effect of State Errors on Fidelity} \label{subsec:fidelity_error_analysis}
The error $\mathcal{E}(\delta,\Delta t)$  in the state vector caused by a perturbation $\delta$ in a coefficient of the control and the truncation error $\Delta t^p$ for a $p$-th order accurate numerical method will be $\mathcal{E}(\delta,\Delta t) = \mathcal{O}( C\delta + \tilde{C} \Delta t^p)$. Here $C$ and $\tilde{C}$ are constants that can depend on the solution and the choice of numerical method but not on $\delta$ and $\Delta t$. Here the goal is to determine the control accurately and it is therefore important that \revisionC[$\tilde{C} \Delta t^p \ll C\delta + \tilde{C}$]{$\tilde{C} \Delta t^p \ll C\delta$}, so that the optimization landscape depends more heavily on the real, physical change in the state vector due to the perturbation $\delta$ than on the non-physical change due to the truncation error. Clearly this is easier to achieve for a high-order method. 

\revision[We note, however, that the error in the infidelity is quadratic in the state error. As a result, the state error can generally be allowed to be quadratically larger than the error in the fidelity. For other objectives, the error may be linear in the state error, and hence a higher level of accuracy in the state will be required.]{We note, however, that the error in the infidelity scales quadratically with the state error. Therefore, an error $\mathcal{E}(\delta, \Delta t)$ in the state vector causes an error $\mathcal{E_\mathcal{I}}(\delta, \Delta t)$ in the infidelity which is $\Ocal(\mathcal{E}(\delta, \Delta t)^2)$. As a result, in order to accurately optimize the infidelity up to a tolerance of $\eta$, $\Delta t$ should be chosen so that the truncation error $\tilde{C}\Delta t^p$ is on the order of $\sqrt{\eta}$ or smaller. For other objective functions, the error may scale linearly with the state error. In that case, a higher level of accuracy in the state will be required.}

As an example, we optimize the generalized infidelity of a CNOT gate for a system of two qubits coupled to a resonator bus (which will be described in more detail in Section \ref{sec:speedup_experiments}). The optimization is performed using a fourth-order method to solve \Schrodinger's equation numerically, with the timestep size chosen so that the relative final state error is approximately $10^{-1}$. For each optimization iteration, we solve the same equation again, choosing the timestep size so that the relative final state error is approximately $10^{-7}$\revision{.} In Figure \ref{fig:cnot3_accuracy_verification}, we plot the generalized infidelity as computed by the low-accuracy and high-accuracy solutions, as well as the relative final state error in the low-accuracy solution, as a function of the number of optimization iterations. We see that although after 500 iterations the optimization produces a gate with a generalized infidelity of approximately $10^{-4}$ as computed by the low-accuracy method, the actual generalized infidelity is closer to $10^{-1}$, and the optimization iterations performed after the generalized infidelity had reached $10^{-2}$ were wasted computations which actually increased the generalized infidelity. \revision{Comparing the high-accuracy infidelity to the final state relative error of the low-accuracy method, we see that the infidelity is generally bounded below by the square of the state error, which is consistent with our above prediction that $\Delta t$ should be chosen so that the truncation error is on the order of $\sqrt{\eta}$, where $\eta$ is the desired tolerance in the infidelity.}

\begin{figure}[htb!]
    \centering
    \includegraphics[width=5.75in, height=2.5in]{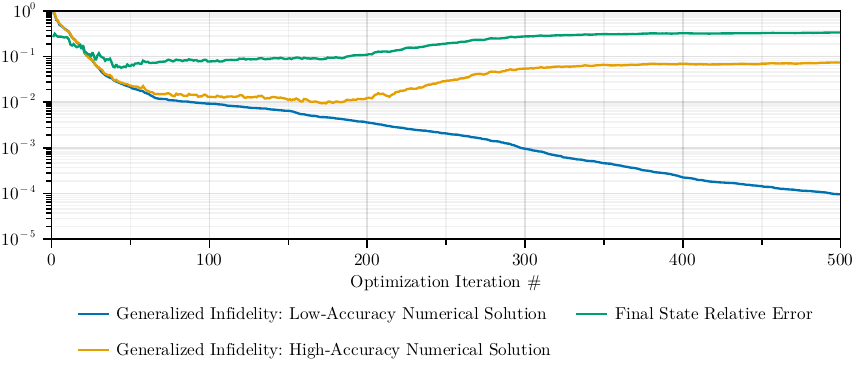}
    \caption{Generalized infidelity of a CNOT gate, as computed by a low-accuracy and high-accuracy numerical method\revision[ as a function of the number of optimization iterations, where the optimization is performed using the low-accuracy method.]{. The generalized infidelity is optimized using a small number of timesteps. For each set of control parameters found at each iteration of the optimization, we also simulate the system using these control parameters and a larger number of timesteps in order to obtain a high-accuracy solution, which is used to compute a more accurate generalized infidelity and the error in the low-accuracy solution at the final time.}}
    \label{fig:cnot3_accuracy_verification}
\end{figure}

\section{Numerical Methods} \label{sec:numerical_methods}
In what follows, we convert \Schrodinger's equation \eqref{eq:schrodinger} into its real-valued formulation, which is commonly done in numerical software. The real-valued form is
\begin{equation*} \label{eq:real_schrodinger_uv}
    \bmat{ \frac{d\ub}{dt} \\ \frac{d\vb}{dt} } = 
    \bmat{ S(t) & K(t) \\ -K(t) & S(t)} \bmat{ \ub \\ \vb },
\end{equation*}
where we have defined $\ub \coloneqq \re(\psib)$, $\vb  \coloneqq \im(\psib)$, $K(t) \coloneqq \re(H(t))$, and $S(t) \coloneqq \im(H(t))$. Because $H(t)$ is Hermitian, we have $K(t)^T = K(t)$ is symmetric, and $S(t)^T = -S(t)$ is anti-symmetric. For convenience, we also define 
\begin{equation*}
    \wb \coloneqq \bmat{\ub \\ \vb}, \quad  A(t) \coloneqq \bmat{ S(t) & K(t) \\ -K(t) & S(t)}
\end{equation*}
so that we can write the real-valued formulation of \Schrodinger's equation more compactly as
\begin{equation} \label{eq:real_schrodinger}
    \frac{d}{dt}\wb(t) = A(t)\wb(t).
\end{equation}
\subsection{Hermite Runge-Kutta Methods}
We now describe Hermite Runge-Kutta methods \cite{Corliss1998HOP_HermiteMethod,gu2020hermite} and how they can be used to efficiently compute discrete adjoints. We follow the description in  \cite{Corliss1998HOP_HermiteMethod} which explicitly considered the case of a variable coefficient linear system of ordinary differential equations and thus can be directly applied to the problem at hand. 

As discussed above, the models we are interested in take the form 
\revision[
\begin{equation} \label{eq:adjoint_state_diffeq}
    \frac{d}{dt}\wb(t) = A(t;\thetab)\wb(t) \quad 0 \leq t \leq T,\quad \wb(0) = \wb_0 \in \R^N.
\end{equation}
]{
\begin{equation} \label{eq:adjoint_state_diffeq}
    \frac{d}{dt}\wb(t) = A(t;\thetab)\wb(t) \quad 0 \leq t \leq T,\quad \wb(0) = \wb_0 \in \R^{2N}.
\end{equation}
}
Here the matrix $A(t;\thetab)$ depends on time and a vector $\thetab$
which parameterizes the time-dependence of the controls. For each simulation of
(\ref{eq:adjoint_state_diffeq}) the parameter vector $\thetab$ will be fixed
and for notational convenience we suppress it in this sub-section and write
$A(t;\thetab) = A(t)$. 

To approximate solutions to (\ref{eq:adjoint_state_diffeq}) we introduce the
equidistant grid $t_n = n \Delta t, \, n = 0,\ldots,\NTime, \, \Delta t = T
/ \NTime$. We denote the numerical approximation to the solution
$\wb(t_n)$ by $\wb_n$. 

Now, to compute $\wb_{n+1}$ from initial data $\wb_n$ we first recast \eqref{eq:adjoint_state_diffeq} as
\begin{equation*}
\wb(t_{n+1}) = \wb(t_n) + \int_{t_n}^{t_{n+1}} \frac{d \wb (t)}{dt} dt,
\end{equation*}
replace $\wb(t_n)$ with $\wb_n$, and then approximate the integral by quadrature.

One way of constructing a Runge-Kutta method is to introduce $s$ nodes between $t_{n}$ and $t_{n+1}$ and then integrate the interpolant of $\frac{d \wb (t)}{dt}$ exactly. This is also the approach in a Hermite Runge-Kutta method where the  $s=p+q$ nodes are split into two sets of nodes containing  $p$ and $q$ nodes respectively.  Unlike most Runge-Kutta methods, here there are no nodes in the interior of the interval $[t_n, t_{n+1}]$ and instead $p$ nodes coalesce at $t_n$  and $q$ nodes  coalesce at $t_{n+1}$. Then the interpolation polynomial becomes a Hermite interpolation polynomial of degree $p+q-1$. That is 
\[
\frac{d \wb (t)}{dt} \approx \sum_{j = 0}^{p-1} \lagP_{0,j}(t) \frac{d^{j+1} \wb (t_n)}{dt^{j+1}} + \sum_{j = 0}^{q-1} \lagP_{1,j}(t) \frac{d^{j+1} \wb (t_{n+1})}{dt^{j+1}},
\]
where $\lagP_{0,j}(t)$ and $\lagP_{1,j}(t)$ are the unique degree $p+q-1$ polynomials (sometimes called generalized Lagrange polynomials, see \cite{DQB}) defined through the conditions 
\begin{multline*}
\frac{d^k \lagP_{0,j}(t_n)}{dt^{k}} = \delta_{kj},\quad \frac{d^l \lagP_{1,j}(t_{n+1})}{dt^{l}} = \delta_{lj},\quad \frac{d^l \lagP_{0,j}(t_{n+1})}{dt^{l}} = 0,\quad \frac{d^k \lagP_{1,j}(t_{n})}{dt^{k}} = 0, \\
j = 0, \dots ,p+q-1,\quad  k = 0,\dots, p-1,\quad l = 0,\dots,q-1.  
\end{multline*}

\revision{Here, $\delta_{ij}$ denotes the Kronecker delta, which equals $1$ if $i = j$ and $0$ otherwise.} After evaluating the integrals of $\lagP_{0,j}(t)$ and $\lagP_{1,j}(t)$ the resulting
one-step method \cite{Corliss1998HOP_HermiteMethod} takes the form
\begin{equation} \label{eq:the_method}
    \sum_{j=0}^{q}(-1)^j c_j^{qp} \frac{\stepDeriv{\wb}{n+1}{j}}{j!}\Delta t^j = 
    \sum_{j=0}^{p} c_j^{pq} \frac{\stepDeriv{\wb}{n}{j}}{j!}\Delta t^j,
\end{equation}
where we have introduced the shorthand notation 
$\stepDeriv{\wb}{n}{j}  =  \frac{d^{j} \wb (t_n)}{dt^{j}}$. Here the coefficients $c_j^{pq}$ can be computed explicitly:
\begin{equation*}
    c_{j}^{pq} = \binom{p}{j} \bigg/ \binom{p+q}{j} = \frac{p!(p+q-j)!}{(p+q)!(p-j)!}.
\end{equation*}

For the case of a closed quantum systems we restrict ourselves to methods with $p=q$, for which this method is A-stable. But we note that for open quantum systems with subsystems that decohere fast, L-stable methods with $q > p$ could be explored. 

\subsection{Computing Higher Order Derivatives and Solving The Linear System of Equations} \label{sec:computing_higher_order_derivatives}
For a general non-linear ODE, computing the derivatives
$\stepDeriv{\wb}{n}{j+1}$ can be cumbersome. But when the ODE is governed by
\eqref{eq:adjoint_state_diffeq}, this computation can be done efficiently and
recursively \cite{Corliss1998HOP_HermiteMethod,icosahom2014} by differentiating
\eqref{eq:adjoint_state_diffeq} $j$ times and using the generalized 
Leibniz rule: 
\begin{equation}\label{eq:recursion}
    \stepDeriv{\wb}{n}{j+1} = \sum_{i=0}^{j}\binom{j}{i} \stepDeriv{A}{n}{j-i} \stepDeriv{\wb}{n}{i},\quad j=0,\dots,q-1.
\end{equation}
If we define the linear operators 
\begin{equation}\label{eq:linear_derivative_matrix}
    D_{n,0} = I,\quad D_{n,j+1} = \sum_{i=0}^j  \binom{j}{i} \stepDeriv{A}{n}{j-i} D_{n,i},\quad j=0,\dots,q-1,
\end{equation}
we observe that $\stepDeriv{\wb}{n}{j} = D_{n,j}\wb_n$, and hence the method
\eqref{eq:the_method} can be rewritten as
\begin{equation} \label{eq:the_method_multi_mat}
    \left( \sum_{j=0}^{q}(-1)^j c_j^{qp} \frac{1}{j!} \Delta t^j D_{n+1,j} \right) \wb_{n+1} = 
    \left( \sum_{j=0}^{p} c_j^{pq} \frac{1}{j!} \Delta t^j D_{n,j}  \right) \wb_{n},
\end{equation}
so that a single timestep consists of solving a single linear system of \revision[$N$]{$2N$} equations:
\begin{equation}\label{eq:timestep_matrices}
    \wb_{n+1} - L_{n+1,q} \wb_{n+1} = \wb_n + R_{n,p} \wb_n,
\end{equation}
where 
\begin{equation}\label{eq:the_method_single_mat}
    L_{n+1,q} \coloneqq - \sum_{j=1}^{q}(-1)^j c_j^{qp} \frac{1}{j!} \Delta t^j D_{n+1,j},\quad
    R_{n,p} \coloneqq \sum_{j=1}^{p} c_j^{pq} \frac{1}{j!} \Delta t^j D_{n,j}.
\end{equation}

We can either form the \revision[$N \times N$]{$2N \times 2N$} matrices $L_{n+1,q}$ and $R_{n,p}$ explicitly by explicitly forming the matrices $D_{n,j}$ and then adding them as in \eqref{eq:the_method_single_mat}, or we can compute $L_{n+1,q}\wb_{n+1}$ and $R_{n,p}\wb_n$ in a ``matrix-free'' fashion by applying $D_{n+1,j}$ and $D_{n,j}$ to $\wb_{n+1}$ and $\wb_n$, and adding the resulting vectors. Furthermore, by storing $D_{n+1,j}\wb_{n+1}$ and $D_{n,j}\wb_n$ after we compute them in the first $j$ terms of the summations in \eqref{eq:the_method_multi_mat}, we can compute $D_{n+1,j+1}\wb_{n+1}$ and $D_{n,j+1}\wb_n$ with only an additional $j$ matrix-vector multiplications. Therefore it costs $q(q+1)/2$ and $p(p+1)/2$ matrix-vector multiplications to compute  $L_{n+1,q}\wb_{n+1}$ and $R_{n+1,q}\wb_n$, respectively, so the cost scales \emph{quadratically} with the order of the method.

In this paper, we always use the matrix-free version. We store only the matrices $\stepDeriv{A}{n}{j}$, which are sparse for most quantum systems, and solve \eqref{eq:timestep_matrices} iteratively, using Algorithm \ref{alg:forward_timestep} to apply $-L_{n+1,q}$ and $R_{n,p}$.

\begin{algorithm}[htb!]
    \caption{Applying $-L_{n,q}$ (or $R_{n,p})$ to $\wb_n$.}
    \label{alg:forward_timestep}
\begin{algorithmic}
    \State For the system of ODEs $\dv{\wb}{t} = A(t;\thetab)\wb$, compute $-L_{n,q}\wb_n$ (or $R_{n,p}\wb_n$), as required for the timestepping rule \eqref{eq:timestep_matrices}. When $p=q$, the timestepping method is A-stable and has order of accuracy $2p$.
    \State
        \State $\frac{\wb_n^{j}}{j!} \gets \frac{1}{j}\sum_{i=0}^{j-1} \frac{1}{(j-1-i)!}A_n^{(j-1-i)}\frac{\wb_n^{(i)}}{i!}$ for $j=1,\dots,q$.
    \State \textbf{return } $\sum_{j=1}^{q} (-1)^j c_j^{qp} \Delta t^j \frac{\wb_n^{(j)}}{j!}$ \Comment{To apply $R_{n,p}$ instead of $L_{n,q}$, swap $p$ and $q$ and remove the $(-1)^j$ factor.}
\end{algorithmic}
\end{algorithm}

\begin{remark}
Although we do not construct the linear operators $L$ and $R$, the simple form of the timestepping \eqref{eq:timestep_matrices} when
written in terms of these linear operators will make deriving the discrete adjoint in Section \ref{sec:discrete_adjoint} simple.
\end{remark}

\subsection{Control Pulse Ansatz}
It should be noted for the quantum optimal control problem that in addition to the matrix  $A$ being the sum of a drift and control matrix, as in \eqref{eq:drift_and_control_hamiltonians}, the time-dependence takes the specific form
\begin{equation} \label{eq:control_pulse_ansatz}
    A(t) = A_{d} + A_{\ctrlA}(t),\ \ \text{where}\quad  A_\ctrlA(t) = \sum_{j=1}^{N_C} \ctrlA_j(t) A_{\ctrlA,j},\
    \quad
    \ctrlA_j(t) = \sum_{k=1}^{n_j} \theta_{j,k} \ctrlA_{j,k}(t), \quad \sum_{j=1}^{\NCtrl} n_j = \NPar.
\end{equation}
Here the multi-index $\theta_{j,k}$ simply indicates the $k$-th control parameter for the $j$-th control pulse (it is also assumed here that each control parameter affects only one control pulse, though in principle this need not be the case). Each $\ctrlA_j(t)$ is a scalar function expressed as a linear combination of pre-defined control functions which we call the \emph{control pulse ansatz}. A nonlinear dependence could also be considered, but constructing the control pulses this way makes it easy to compute their time derivatives and their gradients with respect to the control parameters. It can be done in the following way:
\begin{equation*}
\dv[i]{\ctrlA_{j}(t)}{t}  = \sum_{k=1}^{n_j} \theta_{j,k}\dv[i]{\ctrlA_{j,k}(t)}{t}.
\quad
\pdv{}{\theta_{j,k}}\dv[i]{\ctrlA_j(t)}{t}  = \dv[i]{\ctrlA_{j,k}(t)}{t},
\end{equation*}
In addition, we can choose basis functions $\ctrlA_{j,k}$ whose time derivatives can be easily computed.

The only requirements with respect to the Hermite timestepping method are that we may compute the value of the control pulses and their time derivatives, as well as their gradients with respect to the control parameters, and that the control pulses are sufficiently smooth to match the order of accuracy of the method being used. In principle, any control pulse ansatz may be used.

As our particular choice of control pulse ansatz, we follow the approach of \cite{petersson2022optimal} and use B-splines (basis splines) \cite{deboorSplines2009} multiplied by carrier waves, noting the importance of good frequency selection in achieving low infidelities with a small number of control parameters \cite{CRAB_Caneva_2011}. B-splines have several properties that make them suitable for use as a control pulse ansatz. 1) The partition of unity property ensures that the maximum value of a B-spline curve does not exceed the maximum control coefficient of the basis splines. Therefore, a maximum amplitude of the control pulses can be limited by placing a maximum absolute value on the control coefficients as a constraint in the optimization problem. 2) The local support property makes the pulse parametrization intuitive, since each control coefficient only affects the B-spline curve within a finite, compact region. This also makes it easy to design control pulses which begin and end with amplitude zero, which can be experimentally desirable. 3) The number of continuous derivatives of the B-spline curve increases with the order of basis splines used. Consequently, we can build arbitrarily smooth control pulses by using higher-order basis splines. 4) Finally, there are established, robust, and efficient software implementations of the evaluation of basis splines and B-spline curves \cite{deboorSplines2009}.

\subsection{Discrete Adjoint} \label{sec:discrete_adjoint}
A discrete adjoint method for evaluating derivatives of functions of discrete sequences implicitly defined through forward difference equations has been given by Betancourt et al. \cite{betancourt2020discrete}. We extend that treatment here to be compatible with our method. Let the objective function be defined by the discrete functional
\begin{equation*}
    \Jcal(\thetab) = \sum_{n=0}^{\NTime} \jfunc_n(\wb_n, \thetab).
\end{equation*}
Here we describe the case where the state is a vector (e.g. a state preparation problem), but the result can easily be extended to the matrix (gate design) case. 

The derivatives of $\Jcal$, which we want to compute in order to perform a gradient-based search, are given by
\begin{equation}\label{eq:objective_gradient}
    \dv{\Jcal}{\theta_k} = \sum_{n=0}^N \pdv{\jfunc_n}{\theta_k} 
    + \lpar\pdv{\jfunc_n}{\wb_n}\rpar^T \dv{\wb_n}{\theta_k}.
\end{equation}
The $\inlinepdv{\jfunc_n}{\theta_k}$ terms are zero unless the objective function depends explicitly on $\thetab$, e.g. if it includes a regularization term $\| \thetab \|_2$. For simplicity, we will assume that each $\jfunc_n$ only depends $\thetab$ implicitly through the dependence of $\wb_n$ on $\thetab$. The trajectory of $\wb$ is given by our timestepping rule:
\begin{equation} \label{eq:constraint}
    \wb_{n+1} - L_{n+1,q}\wb_{n+1} = \wb_n + R_{n,p}\wb_n, \quad n=0,1,\dots,\NTime-1.
\end{equation}

Computing $d\Jcal/d\thetab$ directly using \eqref{eq:objective_gradient}
requires computing the sensitivities $d\wb_n/d\theta_k$ for each control
parameter, which requires us to evolve a \Schrodinger equation for each
control parameter. Instead, we treat the method as a constraint in the method
of Lagrange multipliers, giving us the Lagrangian
\begin{align} \label{eq:DiscLagrangian}
    \Lcal(\thetab) &= \Jcal(\thetab) + 
    \sum_{n=0}^{\NTime-1} \innerprod{\wb_{n+1} - L_{n+1,q}\wb_{n+1} - \wb_n - R_{n,p}\wb_n}{\lambdab_{n+1}} \\
    &= \jfunc_\NTime + \sum_{n=0}^{\NTime-1} \jfunc_n + \innerprod{\wb_{n+1}  - L_{n+1,q}\wb_{n+1} - \wb_n - R_{n,p}\wb_n}{\lambdab_{n+1}}.
\end{align}

If the constraints \eqref{eq:constraint} are satisfied, then $\Lcal(\thetab) =
\Jcal(\thetab)$ for any value of the Lagrange multipliers $\lambdab_n$. Because the constraints are
defined using the exact timestepping scheme we use to compute each $\wb_n$, we
may assume the constraints are always satisfied and therefore $\Lcal(\thetab)
\equiv \Jcal(\thetab)$. Taking $\dv{}{\theta_k}$ and making use of the fact
that each $\lambdab_n$ has no explicit dependence on $\thetab$ (we are free to
choose $\lambdab_n$ as we like), we get
\begin{equation}\label{eq:gradient_of_discL}
\dv{\Lcal}{\theta_k} = \dv{\Jcal}{\theta_k} = 
    \dv{\jfunc_\NTime}{\theta_k} + \sum_{n=0}^{\NTime-1} \dv{\jfunc_n}{\theta_k} + \innerprod{\dv{\wb_{n+1}}{\theta_k} - \dv{}{\theta_k}\left(L_{n+1,q}\wb_{n+1}\right) - \dv{\wb_n}{\theta_k} - \dv{}{\theta_k}\left(R_{n,p}\wb_{n}\right)}{\lambdab_{n+1}}.
\end{equation}
Our goal is to rewrite \eqref{eq:gradient_of_discL} and choose $\lambda_n$ such that they cancel out the
sensitivities $d\wb_n/d\theta_k$, leaving us with a gradient  is much
easier to compute.

\begin{theorem} \label{thm:discrete_adjoint}
Let the numerical approximation to \Schrodinger's equation
\eqref{eq:schrodinger} satisfy the timestepping rule
\eqref{eq:constraint}. Then the exact gradient of the discrete cost function 
\eqref{eq:DiscLagrangian} with respect to the numerical method
\eqref{eq:the_method} can be computed by
\begin{equation} \label{eq:gradient_accumulation}
\dv{\Jcal}{\theta_k} = 
-\sum_{n=0}^{\NTime-1} 
\innerprod{
    \pdv{L_{n+1,q}}{\theta_k}\wb_{n+1} + \pdv{R_{n,p}}{\theta_k}\wb_n
}
{
    \ \lambdab_{n+1}
},
\end{equation}
where $\lambdab_n$ is chosen to satisfy the terminal condition and adjoint equations\footnote{We follow the convention that the partial derivative of a scalar with respect to a column vector is a row vector.}
\begin{align}
    \lambdab_\NTime - (L_{\NTime,q})^T \lambdab_\NTime
&= -\lpar \pdv{\jfunc_\NTime}{\wb_\NTime} \rpar^T \label{eq:terminal_condition}, \\
    \lambdab_n - (L_{n,q})^T \lambdab_n
    &= \lambdab_{n+1} + (R_{n,p})^T \lambdab_{n+1}
    - \lpar \pdv{\jfunc_n}{\wb_n}  \rpar^T,\quad (n = \NTime-1,\dots,1). \label{eq:adjoint_equations}
\end{align}
\end{theorem}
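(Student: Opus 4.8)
The plan is to start from the differentiated Lagrangian \eqref{eq:gradient_of_discL} and choose the multipliers $\lambdab_n$ so that every sensitivity $\dv{\wb_n}{\theta_k}$ is eliminated, leaving a formula that can be evaluated with a single backward sweep. First I would apply the product rule inside each inner product, writing $\dv{}{\theta_k}\lpar L_{n+1,q}\wb_{n+1}\rpar = \pdv{L_{n+1,q}}{\theta_k}\wb_{n+1} + L_{n+1,q}\dv{\wb_{n+1}}{\theta_k}$ and similarly for the $R_{n,p}$ term. Since by assumption each $\jfunc_n$ depends on $\thetab$ only through $\wb_n$, the chain rule gives $\dv{\jfunc_n}{\theta_k} = \lpar\pdv{\jfunc_n}{\wb_n}\rpar\dv{\wb_n}{\theta_k}$. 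Substituting both into \eqref{eq:gradient_of_discL} splits the right-hand side into an explicit part, namely $-\sum_{n=0}^{\NTime-1}\innerprod{\pdv{L_{n+1,q}}{\theta_k}\wb_{n+1} + \pdv{R_{n,p}}{\theta_k}\wb_n}{\lambdab_{n+1}}$, plus a remainder that is linear in the sensitivities $\dv{\wb_n}{\theta_k}$.

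\noindent\textbf{Regrouping by time index.} Using the symmetry of the real inner product on $\R^{2N}$ and the identity $\innerprod{M\vb}{\lambdab} = \innerprod{\vb}{M^T\lambdab}$, I would collect the coefficient of each $\dv{\wb_n}{\theta_k}$ in the remainder. The terms coming from $\innerprod{(I - L_{n+1,q})\dv{\wb_{n+1}}{\theta_k}}{\lambdab_{n+1}}$, after the index shift $n+1\mapsto n$, contribute $\innerprod{\dv{\wb_n}{\theta_k}}{\lambdab_n - (L_{n,q})^T\lambdab_n}$ for $n = 1,\dots,\NTime$; the terms $-\innerprod{(I + R_{n,p})\dv{\wb_n}{\theta_k}}{\lambdab_{n+1}}$ contribute $-\innerprod{\dv{\wb_n}{\theta_k}}{\lambdab_{n+1} + (R_{n,p})^T\lambdab_{n+1}}$ for $n = 0,\dots,\NTime-1$; and the objective contributes $\innerprod{\dv{\wb_n}{\theta_k}}{\lpar\pdv{\jfunc_n}{\wb_n}\rpar^T}$ for $n = 0,\dots,\NTime$. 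Because the initial data $\wb_0$ is fixed, $\dv{\wb_0}{\theta_k} = \zerob$ and the $n=0$ contributions vanish. The net coefficient of $\dv{\wb_\NTime}{\theta_k}$ is therefore $\lambdab_\NTime - (L_{\NTime,q})^T\lambdab_\NTime + \lpar\pdv{\jfunc_\NTime}{\wb_\NTime}\rpar^T$, and for $1\le n\le\NTime-1$ it is $\lambdab_n - (L_{n,q})^T\lambdab_n - \lambdab_{n+1} - (R_{n,p})^T\lambdab_{n+1} + \lpar\pdv{\jfunc_n}{\wb_n}\rpar^T$.

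\noindent\textbf{Choosing the multipliers.} The $\lambdab_n$ carry no explicit $\thetab$-dependence, so we are free to define them by forcing each of these coefficients to vanish. Requiring the coefficient of $\dv{\wb_\NTime}{\theta_k}$ to be zero is exactly the terminal condition \eqref{eq:terminal_condition}; requiring the coefficient of $\dv{\wb_n}{\theta_k}$ to be zero for $n = \NTime-1,\dots,1$ is exactly the adjoint recursion \eqref{eq:adjoint_equations}. This backward recursion is well posed: the matrix $I - (L_{n,q})^T$ is the transpose of $I - L_{n,q}$, which is invertible because it is the operator acting on $\wb_{n+1}$ on the left-hand side of the forward step \eqref{eq:timestep_matrices} and must be invertible for the scheme to be well defined. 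With this choice of $\lambdab_n$ the entire sensitivity-dependent remainder is annihilated and only the explicit part survives, which is precisely \eqref{eq:gradient_accumulation}.

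\noindent\textbf{Anticipated difficulty.} The argument uses nothing beyond the product rule, the chain rule, and linearity of the inner product, so the only genuine care needed is in the bookkeeping: correctly performing the index shift on the $\dv{\wb_{n+1}}{\theta_k}$ sum, tracking which boundary terms (at $n=0$, where the sensitivity vanishes, and at $n=\NTime$, which produces the terminal condition) survive, and being consistent with the row-vector convention for $\pdv{\jfunc_n}{\wb_n}$ and with the placement of transposes when moving $L_{n,q}$ and $R_{n,p}$ across the inner product. An off-by-one in the reindexing would corrupt the terminal condition, so that is the step I would verify most carefully; everything else is routine.
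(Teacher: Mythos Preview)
Your proposal is correct and follows essentially the same route as the paper: differentiate the Lagrangian, expand via the product and chain rules, regroup the coefficients of each sensitivity $\dv{\wb_n}{\theta_k}$ using $\innerprod{M\vb}{\lambdab}=\innerprod{\vb}{M^T\lambdab}$, and choose the $\lambdab_n$ to annihilate those coefficients. Your treatment is in fact slightly more careful than the paper's in two places---you make explicit that $\dv{\wb_0}{\theta_k}=\zerob$ handles the $n=0$ boundary (the paper leaves this implicit) and you note the well-posedness of the backward recursion via invertibility of $I-(L_{n,q})^T$.
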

To this end we  refer to the solving of the linear systems in \eqref{eq:terminal_condition} and \eqref{eq:adjoint_equations} as the \textbf{adjoint evolution} and the explicit computation of the gradient in equation \eqref{eq:gradient_accumulation} as the \textbf{gradient accumulation}. The entire procedure of performing the forward evolution, adjoint evolution, and gradient accumulation is summarized in Algorithm \ref{alg:discrete_adjoint}.
\begin{proof}
Applying the chain rule to \eqref{eq:gradient_of_discL} yields

\begin{multline*}
\dv{\Jcal}{\theta_k} = 
 \pdv{\jfunc_\NTime}{\wb_\NTime}\dv{\wb_\NTime}{\theta_k}
+ \sum_{n=0}^{\NTime-1} 
\pdv{\jfunc_n}{\wb_n}\dv{\wb_n}{\theta_k} \\
+\sum_{n=0}^{\NTime-1} \innerprod
{
    \dv{\wb_{n+1}}{\theta_k} - 
    \pdv{\left(L_{n+1,q}\wb_{n+1}\right)}{\theta_k}  -
    \pdv{\left(L_{n+1,q}\wb_{n+1}\right)}{\wb_{n+1}} \dv{\wb_{n+1}}{\theta_k} -
    \dv{\wb_n}{\theta_k} - 
    \pdv{\left( R_{n,p}\wb_n \right)}{\theta_k}  -
     \pdv{\left( R_{n,p}\wb_n \right)}{\wb_{n}}\dv{\wb_n}{\theta_k}
}
{\lambdab_{n+1}}
.
\end{multline*}
For a matrix vector product $M\xb$, we have $\partial(M\xb)/\partial\xb = M$.
Also, the state vectors have no explicit dependence on $\thetab$ (the
dependence is only implicit), so the above expression for $\inlinedv{\Jcal}{\theta_k}$ simplifies to
\begin{equation*} 
 \pdv{\jfunc_\NTime}{\wb_\NTime}\dv{\wb_\NTime}{\theta_k} \\
+ \sum_{n=0}^{\NTime-1} 
\pdv{\jfunc_n}{\wb_n}\dv{\wb_n}{\theta_k}
+ \innerprod
{
    \dv{\wb_{n+1}}{\theta_k} - 
    \pdv{L_{n+1,q}}{\theta_k}\wb_{n+1}  -
    L_{n+1,q} \dv{\wb_{n+1}}{\theta_k} -
    \dv{\wb_n}{\theta_k} - 
    \pdv{R_{n,p}}{\theta_k}\wb_n   -
    R_{n,p}\dv{\wb_n}{\theta_k}
}
{\lambdab_{n+1}}
.
\end{equation*}
We rearrange the terms to make it clear how to choose $\lambdab$ to cancel out the sensitivities:
\begin{multline*}
\dv{\Jcal}{\theta_k} = 
     \pdv{\jfunc_\NTime}{\wb_\NTime}\dv{\wb_\NTime}{\theta_k} +
    \innerprod{\dv{\wb_\NTime}{\theta_k} - L_{\NTime,q} \dv{\wb_{\NTime}}{\theta_k}}{\lambdab_\NTime} \\
     + \sum_{n=0}^{\NTime-1}
\pdv{\jfunc_n}{\wb_n} \dv{\wb_k}{\theta_k} + 
    \innerprod{ \dv{\wb_n}{\theta_k} - L_{n,q}\dv{\wb_n}{\theta_k}}{ \lambdab_n } -
    \innerprod{ \dv{\wb_n}{\theta_k} + R_{n,p}\dv{\wb_n}{\theta_k} }{ \lambdab_{n+1}  } - \innerprod{\pdv{L_{n+1,q}}{\theta_k}\wb_{n+1} + \pdv{R_{n,p}}{\theta_k}\wb_n}{\lambdab_{n+1}}.
\end{multline*}
Now, making use of the identities $\innerprod{\boldsymbol{a}
\boldsymbol{b}}{\boldsymbol{c}} = \innerprod{
    \boldsymbol{b}}{\boldsymbol{a}^T \boldsymbol{c}}$ and $\boldsymbol{a}^T
\boldsymbol{b} = \boldsymbol{b}^T \boldsymbol{a}$, we get
\begin{multline}
\dv{\Jcal}{\theta_k} = 
 \innerprod{\dv{\wb_\NTime}{\theta_k}}{ 
    \lpar \pdv{\jfunc_\NTime}{\wb_\NTime} \rpar^T + \lambdab_\NTime - \lpar L_{\NTime,q} \rpar^T \lambdab_\NTime
} \label{eq:discrete_adjoint_before_cancellation} \\
 + \sum_{n=0}^{\NTime-1}
\innerprod{\dv{\wb_k}{\theta_k}}{
    \lpar \pdv{\jfunc_n}{\wb_n}  \rpar^T
    +  \lambdab_n - \lpar L_{n,q} \rpar^T \lambdab_n
    - \lambdab_{n+1} - \lpar R_{n,p} \rpar^T \lambdab_{n+1}
} -  \innerprod{\pdv{L_{n+1,q}}{\theta_k}\wb_{n+1} + \pdv{R_{n,p}}{\theta_k}\wb_n}{\lambdab_{n+1}}. 
\end{multline} 
If we choose $\lambdab$ to satisfy the terminal condition and adjoint equations
\eqref{eq:terminal_condition} and \eqref{eq:adjoint_equations}, then we will
have
\begin{align*}
    \lpar \pdv{\jfunc_\NTime}{\wb_\NTime} \rpar^T + \lambdab_\NTime - (L_{\NTime,q})^T \lambdab_\NTime &= \zerob \\
    \lpar \pdv{\jfunc_n}{\wb_n}  \rpar^T + \lambdab_n - (L_{n,q})^T \lambdab_n
    - \lambdab_{n+1} - (R_{n,p})^T \lambdab_{n+1} &= \zerob
    ,\quad (n = 0,\dots,\NTime-1).
\end{align*}
Consequently all occurrences of $d\wb_n/d\theta_k$ in $d\Jcal/d\theta_k$ as
written in \eqref{eq:discrete_adjoint_before_cancellation} will have
their inner product taken with a quantity equal to $\zerob$, so that the only
remaining nonzero terms are exactly those given in \eqref{eq:gradient_accumulation}.
\end{proof}

\begin{algorithm}[htb!]
\caption{The HOHO Method / Discrete Adjoint}
\label{alg:discrete_adjoint}
\begin{algorithmic}
    \State Given the initial value problem $\dv{\wb}{t} = A(t;\thetab)\wb,\quad 0 \leq t \leq T, \quad \wb(0) = \wb_0,$ and the discrete objective functional $\Jcal = \sum_{n=1}^{\NTime} \jfunc(\wb_n)$, efficiently compute the gradient $\nabla \Jcal = [\inlinedv{\Jcal}{\theta_1}, \dots, \inlinedv{\Jcal}{\theta_{\revisionC[\NCtrl]{\NPar}}}]^T$ exactly with respect to the Hermite Runge-Kutta method using $q$ and $p$ derivatives for the  implicit/explicit parts.
    \State
    \State \textbf{Forward Evolution (Solve Forward Equations)}\Comment{See Algorithm \ref{alg:forward_timestep}.}
    \For{$n=1:\NTime$}
        \State Compute $\wb_n + R_{n,p}\wb_n$ explicitly.
        \State Solve $\wb_{n+1} - L_{n+1,q}\wb_{n+1} = \wb_n + R_{n,p}\wb_n$ for $\wb_{n+1}$ using the most efficient method available.
    \EndFor
    \State
    \State \textbf{Adjoint Evolution (Solve Terminal Condition and Adjoint Equations)}\Comment{See Algorithm \ref{alg:adjoint_evolution}.}
    \State Compute $-\left(\pdv{\jfunc_\NTime}{\wb_\NTime}\right)$ explicitly.
    \State Solve $\lambdab_\NTime - \lpar L_{\NTime,q}\rpar^T \lambdab_\NTime = -\left(\pdv{\jfunc_\NTime}{\wb_\NTime}\right)$ for $\lambdab_\NTime$ using the most efficient method available.
    \For{$n=\NTime-1:-1:1$}
    \State Compute $\lambdab_{n+1} + \lpar R_{n,p} \rpar^T\lambdab_{n+1}-\left(\pdv{\jfunc_n}{\wb_n}\right)$ explicitly.
    \State Solve $\lambdab_n - \lpar L_{n,q}\rpar^T \lambdab_n = \lambdab_{n+1} + \lpar R_{n,p} \rpar^T\lambdab_{n+1} -\left(\pdv{\jfunc_\NTime}{\wb_\NTime}\right)$ for $\lambdab_n$ using the most efficient method available.
    \EndFor
    \State
    \State \textbf{Gradient Accumulation}\Comment{See Algorithm \ref{alg:efficient_gradient} for more efficient gradient accumulation.}
    \State $\nabla \Jcal \gets \zerob$
    \State $(\nabla \Jcal)_k \gets (\nabla \Jcal)_k - \innerprod{\pdv{L_{n+1,q}}{\theta_k}\wb_{n+1} + \pdv{R_{n,p}}{\theta_k}\wb_n}{\lambdab_{n+1}}$ for $n=1,\dots,\NTime,\quad k=1,\dots,\NPar$
    % Mention "with L/R defined as in section/equation N"
    \State \textbf{return } $\nabla \Jcal$
\end{algorithmic}
\end{algorithm}

\begin{remark}
    When the state is a matrix (e.g. a gate design problem), the gradient may be computed by performing the forward evolution, adjoint evolution, and gradient accumulation for each initial condition and summing the results. The computation is almost completely independent for each initial condition, except that computing $\inlinepdv{\jfunc_n}{\wb_n}$, where $\wb_n$ is the state vector after $n$ timesteps of a single initial condition may require the state vector after the $n$-th timestep for every initial condition. Typically, this is only the case for the final time objective $\jfunc_\NTime$ (e.g. the infidelity). Therefore, we can perfectly parallelize the forward evolution across the initial conditions, then solve the terminal condition equation \eqref{eq:terminal_condition}, then perform the rest of the adjoint evolution \eqref{eq:adjoint_equations} and gradient accumulation \eqref{eq:gradient_accumulation} in parallel.
\end{remark}

Unlike the forward and adjoint evolutions, performing the gradient accumulation \eqref{eq:gradient_accumulation} does not require solving any linear systems; it only requires explicit matrix-vector multiplications. But when the forward and adjoint evolutions are performed using a small timestep size $\Delta t$, a good initial guess for each timestep, and/or an effective preconditioner, the linear systems may be solved in only a few iterations, requiring only a few matrix-vector multiplications. In that case, because the number of matrix-vector multiplications required by the gradient accumulation as written in \eqref{eq:gradient_accumulation} scales with the number of control parameters and the number of timesteps, the gradient accumulation can be much more expensive than the forward and adjoint evolutions if handled naively. We now show how to perform the gradient accumulation \eqref{eq:gradient_accumulation} using a number of matrix-vector multiplications that is independent of the number of control parameters.
\begin{theorem} \label{thm:efficient_gradient_accumulation}
    When the time-dependence of $A(t)$ has the structure \eqref{eq:control_pulse_ansatz}, the gradient accumulation \eqref{eq:gradient_accumulation} can be rewritten so that the number of matrix-vector multiplications required is independent of the number of control parameters $\revisionC[\NCtrl]{\NPar}$.
\end{theorem}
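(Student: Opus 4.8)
The plan is to exploit the fact that, under the ansatz \eqref{eq:control_pulse_ansatz}, the coefficient matrices $\stepDeriv{A}{n}{i}$ depend \emph{linearly} on the control parameters, so that every $\theta_k$-dependence can be pulled out of the matrix--vector products in \eqref{eq:gradient_accumulation} as a scalar. Writing $\theta_k = \theta_{j,m}$ for the $m$-th parameter of the $j$-th pulse, I would first record the elementary consequence of \eqref{eq:control_pulse_ansatz}: for every node $n$ and every order $i\ge 0$,
\[
\pdv{\stepDeriv{A}{n}{i}}{\theta_{j,m}} = \stepDeriv{c_{j,m}}{n}{i}\, A_{\ctrlA,j}, \qquad \stepDeriv{c_{j,m}}{n}{i} := \dv[i]{\ctrlA_{j,m}}{t}(t_n),
\]
with the convention $\stepDeriv{c_{j,m}}{n}{0} = \ctrlA_{j,m}(t_n)$ (the drift $A_d$ drops out of the $i=0$ term). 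The essential structural point is that the \emph{matrix} $A_{\ctrlA,j}$ depends only on the pulse index $j$, while the only dependence on the parameter index $m$ is carried by the cheap \emph{scalar} $\stepDeriv{c_{j,m}}{n}{i}$.

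Second, I would differentiate the recursion \eqref{eq:linear_derivative_matrix} for $D_{n,\ell}$ with respect to $\theta_{j,m}$ and show, by induction on $\ell$, that
\[
\pdv{D_{n,\ell}}{\theta_{j,m}} = \sum_{r=0}^{\ell-1}\stepDeriv{c_{j,m}}{n}{r}\, E^{(j)}_{n,\ell,r},
\]
where the operators $E^{(j)}_{n,\ell,r}$ satisfy their own recursion (obtained by collecting the differentiated \eqref{eq:linear_derivative_matrix} by powers of $\stepDeriv{c_{j,m}}{n}{r}$) and do \emph{not} depend on $m$. Feeding this into the definitions \eqref{eq:the_method_single_mat} of $L_{n+1,q}$, $R_{n,p}$ and applying the result to the already-computed forward vectors (using $\stepDeriv{\wb}{n}{i} = D_{n,i}\wb_n$) gives
\[
\pdv{L_{n+1,q}}{\theta_{j,m}}\wb_{n+1} = \sum_{r=0}^{q-1}\stepDeriv{c_{j,m}}{n+1}{r}\,\pb^{(j)}_{n+1,r}, \qquad \pdv{R_{n,p}}{\theta_{j,m}}\wb_{n} = \sum_{r=0}^{p-1}\stepDeriv{c_{j,m}}{n}{r}\,\qb^{(j)}_{n,r},
\]
where $\pb^{(j)}_{n+1,r} = -\sum_{\ell=r+1}^{q}(-1)^\ell c_\ell^{qp}\tfrac{\Delta t^\ell}{\ell!}E^{(j)}_{n+1,\ell,r}\wb_{n+1}$ (and similarly $\qb^{(j)}_{n,r}$) are independent of $m$ and, from the $E^{(j)}$-recursion, computable with a number of matrix--vector products that is $\mathrm{poly}(q)$ (resp. $\mathrm{poly}(p)$) per timestep and proportional to $\NCtrl$, but independent of $\NPar$.

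Third, substitute these into \eqref{eq:gradient_accumulation} and use bilinearity of the inner product:
\[
\dv{\Jcal}{\theta_{j,m}} = -\sum_{n=0}^{\NTime-1}\Bigl(\sum_{r=0}^{q-1}\stepDeriv{c_{j,m}}{n+1}{r}\innerprod{\pb^{(j)}_{n+1,r}}{\lambdab_{n+1}} + \sum_{r=0}^{p-1}\stepDeriv{c_{j,m}}{n}{r}\innerprod{\qb^{(j)}_{n,r}}{\lambdab_{n+1}}\Bigr).
\]
The scalars $\innerprod{\pb^{(j)}_{n+1,r}}{\lambdab_{n+1}}$ and $\innerprod{\qb^{(j)}_{n,r}}{\lambdab_{n+1}}$ depend only on $(n,j,r)$; they are formed in a single backward pass at matrix--vector cost $\Ocal(\NTime\,\NCtrl\,\mathrm{poly}(q))$, and then every gradient component $\inlinedv{\Jcal}{\theta_{j,m}}$ is assembled from these precomputed numbers using only scalar multiplications and additions with the $\stepDeriv{c_{j,m}}{n}{r}$ (which are evaluations of the ansatz basis functions and their derivatives). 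Hence the number of matrix--vector multiplications is independent of $\NPar$, which is the claim; packaging this into the practical bookkeeping yields Algorithm \ref{alg:efficient_gradient}.

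\noindent\textbf{Main obstacle.} The crux is the inductive step: carefully differentiating the nested recursion \eqref{eq:linear_derivative_matrix}, re-indexing the resulting double sums, tracking the binomial factors, and confirming that after collecting by powers of $\stepDeriv{c_{j,m}}{n}{r}$ the residual operators $E^{(j)}_{n,\ell,r}$ genuinely carry no $m$-dependence (including the correct handling of the $i=0$/drift term). Everything downstream is bilinearity and accounting.
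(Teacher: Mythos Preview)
Your proposal is correct and reaches the same conclusion, but the route differs from the paper's in a way worth noting. You perform an \emph{operator-level} induction: you factor $\inlinepdv{D_{n,\ell}}{\theta_{j,m}}$ as $\sum_r \stepDeriv{c_{j,m}}{n}{r}E^{(j)}_{n,\ell,r}$ with $m$-independent operators $E^{(j)}_{n,\ell,r}$, apply these to $\wb_n$ to obtain the vectors $\pb^{(j)}_{n+1,r},\qb^{(j)}_{n,r}$, and only then pair with $\lambdab_{n+1}$. The paper instead works at the \emph{inner-product level}: it expands $\innerprod{\inlinepdv{D_{n,j+1}}{\theta_k}\wb_n}{\lambdab_{n+1}}$ via the product-rule-differentiated recursion \eqref{eq:linear_derivative_matrix} and uses the transpose identity $\innerprod{M\ab}{\bb}=\innerprod{\ab}{M^T\bb}$ to push the undifferentiated $\stepDeriv{A}{n}{j-i}$ onto $\lambdab_{n+1}$, recursing until only ``leaf'' inner products $\innerprod{\inlinepdv{\stepDeriv{A}{n}{i}}{\theta_k}\xb}{\yb}$ remain, with $\xb=\stepDeriv{\wb}{n}{i}$ already stored from the forward evolution. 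The ansatz then gives $\innerprod{\inlinepdv{\stepDeriv{A}{n}{i}}{\theta_k}\xb}{\yb}=\sum_{j}\inlinepdv{c_j^{(i)}(t_n)}{\theta_k}\innerprod{A_{\ctrlA,j}\xb}{\yb}$, which needs only $\NCtrl$ matrix-vector products. Your approach is a ``forward'' factorization (build parameter-independent vectors, then contract with $\lambdab$); the paper's is an ``adjoint'' one (recursively modify $\lambdab$ via transposes). Both establish the theorem, but Algorithm~\ref{alg:efficient_gradient} is actually organized around the paper's transpose recursion (the procedure $\nabla\textrm{D}$ takes a running $\lambdab$ argument), so your final sentence that your bookkeeping ``yields Algorithm~\ref{alg:efficient_gradient}'' is not quite right---it would yield a closely related but differently structured algorithm. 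Your stated ``main obstacle'' (the inductive bookkeeping) is genuine but routine; the paper sidesteps an explicit induction by working recursively on the scalar inner product instead.
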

\begin{proof}
    Expanding $L_{n+1,q}$ and $R_{n,p}$ in \eqref{eq:gradient_accumulation} (and using the fact that $D_{n,0}=I$ does not depend on $\thetab$), we get
    \begin{align}
        \dv{\Jcal}{\theta_k} &= 
        - \sum_{n=0}^{\NTime-1}\lpar 
        \innerprod{
            -\sum_{j=1}^q  (-1)^j c_j^{qp} \frac{1}{j!}  \pdv{D_{n+1,j}}{\theta_k}\wb_{n+1} + \sum_{j=1}^p c_i^{pq} \frac{1}{j!} \pdv{D_{n,j}}{\theta_k}\wb_n}
        {\lambdab_{n+1}} \rpar \\
        &=   \sum_{n=0}^{\NTime-1}\lpar 
         \sum_{j=1}^q (-1)^j c_j^{qp}\frac{1}{j!}\innerprod{\pdv{D_{n+1,j}}{\theta_k}\wb_{n+1}}{\lambdab_{n+1}} - \sum_{j=1}^p c_j^{pq}\frac{1}{j!}\innerprod{\pdv{D_{n,j}}{\theta_k}\wb_{n}}
        {\lambdab_{n+1}}
        \rpar. \label{eq:grad_accum_inner_products_separated}
    \end{align}
    To compute the inner products efficiently, we take $\partial / \partial \theta_k$ of \eqref{eq:linear_derivative_matrix}:
    \begin{equation*}
        \pdv{D_{n,0}}{\theta_k} = 0,\quad \pdv{D_{n,j+1}}{\theta_k} = \sum_{i=0}^j  \binom{j}{i} \pdv{\stepDeriv{A}{n}{j-i}}{\theta_k} D_{n,i} + \stepDeriv{A}{n}{j-i} \pdv{D_{n,i}}{\theta_k},\quad j=0,\dots,q-1,
    \end{equation*}
    and use the identity $\innerprod{M\ab}{\bb} = \innerprod{\ab}{M^T\bb}$ to rewrite the inner product as
    \begin{align} \label{eq:gradient_accumulation_recursion_1}
        \innerprod{\pdv{D_{n,j+1}}{\theta_k}\wb_n}{\lambdab_{n+1}} &= 
        \sum_{i=0}^j \binom{j}{i}
        \innerprod{ \pdv{\stepDeriv{A}{n}{j-i}}{\theta_k}D_{n,i}\wb_n}{\lambdab_{n+1}}
        + \binom{j}{i} \innerprod{ \stepDeriv{A}{n}{j-i}\pdv{D_{n,i}}{\theta_k} \wb_n}{\lambdab_{n+1}}
         \\
        &=
        \sum_{i=0}^j \binom{j}{i}
        \innerprod{ \pdv{\stepDeriv{A}{n}{j-i}}{\theta_k}D_{n,i}\wb_n}{\lambdab_{n+1}}
        + \binom{j}{i} \innerprod{ \pdv{D_{n,i}}{\theta_k} \wb_n}{\lpar \stepDeriv{A}{n}{j-i} \rpar^T \lambdab_{n+1}}
            . \label{eq:gradient_accumulation_recursion_2}
    \end{align}
    The inner products contatining $\partial \stepDeriv{A}{n}{j-i} / \partial \theta_k$ in \eqref{eq:gradient_accumulation_recursion_2} can be handled explicitly (and more efficiently by also storing each $D_{n,i}\wb_n$ to use when computing $D_{n,i+1}\wb_n$, as described in Section \ref{sec:computing_higher_order_derivatives}), while the inner products containing $\partial D_{n,i}/\partial \theta_k$ can be handled recursively by expanding the inner product using \eqref{eq:gradient_accumulation_recursion_1}. Ultimately, the only matrix-vector multiplications performed will be in the computation of the first argument of inner products of the form $\innerprod{\pdv{\stepDeriv{A}{n}{i}}{\theta_k}\xb}{\yb}$.

    The number of matrix-vector multiplications scales with the number of control parameters because the matrix $\inlinepdv{A_n^{(i)}}{\theta_k}$ may be different for each $\theta_k$. However, when $A(t)$ has the form \eqref{eq:control_pulse_ansatz} the inner product may be rewritten as
    \begin{equation} \label{eq:grad_accum_efficent_inner_product}
        \innerprod{\pdv{\stepDeriv{A}{n}{i}}{\theta_k}\xb}{\yb}
        = \sum_{j=1}^\NCtrl \pdv{c_j^{(i)}(t_n)}{\theta_k}\innerprod{A_{\ctrlA,j}\xb}{\yb}
        = \sum_{j=1}^\NCtrl \pdv{c_j^{(i)}(t_n)}{\theta_k}\innerprod{\xb}{A^T_{\ctrlA,j}\yb},
    \end{equation}
    which allows us to compute the inner product by performing only $\NCtrl$ (the number of controls) matrix-vector multiplications, independent of the number of control parameters, completing our proof. 
\end{proof} 

\begin{remark} \label{rmk:efficient_gradient_accumulation}
    Moving $A_{\ctrlA,j}$ to the second argument of the inner product in \eqref{eq:grad_accum_efficent_inner_product} allows us to compute the two inner products in \eqref{eq:grad_accum_inner_products_separated} using the same matrix-vector multiplications, since the inner products have the same second argument. I.e., the gradient accumulation requires us to compute inner products 
    \begin{equation*}
        \innerprod{\pdv{\stepDeriv{A}{n}{i}}{\theta_k}\xb_1}{\yb}
        = \sum_{j=1}^\NCtrl \pdv{c_j^{(i)}(t_n)}{\theta_k}\innerprod{\xb_1}{A^T_{\ctrlA,j}\yb}, \quad
        \innerprod{\pdv{\stepDeriv{A}{n+1}{i}}{\theta_k}\xb_2}{\yb} 
        = \sum_{j=1}^\NCtrl \pdv{c_j^{(i)}(t_{n+1})}{\theta_k}\innerprod{\xb_2}{A^T_{\ctrlA,j}\yb},
    \end{equation*}
    and by reusing $A_{c,j}^T \yb$ we can compute both inner products with only $\NCtrl$ matrix-vector multiplications. This halves the number of matrix-vector multiplications required for the gradient accumulation.
\end{remark}

The efficient gradient accumulation techniques detailed in Theorem \ref{thm:efficient_gradient_accumulation} and Remark \ref{rmk:efficient_gradient_accumulation} can by applied by using Algorithm \ref{alg:efficient_gradient}.

\begin{algorithm}[htb!]
    \caption{Efficient Gradient Accumulation}
    \label{alg:efficient_gradient}
\begin{algorithmic}
    \State Compute $\nabla_{\thetab}\innerprod{L_{n+1,q}\wb_{n+1} + R_{n,p}\wb_n}{\lambdab_{n+1}}$, in order to perform the gradient accumulation \eqref{eq:gradient_accumulation} efficiently when $A$ takes the usual form: $A(t;\thetab) = A_0 + c_1(t;\thetab)A_1 + \dots + c_\NCtrl(t;\thetab)A_\NCtrl$. Assumes $p=q$, so that the recursive procedures for computing $\nabla_{\thetab}\innerprod{L_{n+1,q}\wb_{n+1}}{\lambdab_{n+1}}$ and $\nabla_{\thetab}\innerprod{R_{n,q}\wb_{n}}{\lambdab_{n+1}}$ have the same depth.
    \State 
    \State Precompute $A_k\frac{\wb_{n+1}^{(i)}}{i!}$ and $A_k\frac{\wb_{n}^{(i)}}{i!}$ for $i=0,\dots,q$,\quad $k=1,\dots,\NCtrl$.
    \State \textbf{return } $\sum_{j=1}^q$ \Call{$\nabla$D}{$\lambdab_{n+1},\ j,\ (-1)^{j+1}c_j^{qp}\Delta t^j,\ c_j^{pq}\Delta t^j$}
    \State
    \Procedure{$\nabla$D}{$\lambdab, j, c_L, c_R$} $\longrightarrow c_L\nabla_{\thetab} \innerprod{L_{n+1,q}\wb_{n+1}}{\lambdab} + c_R \nabla_{\thetab} \innerprod{R_{n,p}\wb_n}{\lambdab_{n+1}} $
        \State \textbf{return}  $\frac{1}{j}\sum_{i=0}^{j-1}\left( \nabla \textrm{D}(\frac{1}{(j-1-i)!}A^{(j-1-i)}\lambdab,\ i,\ c_L,\ c_R) + \sum_{k=1}^\NCtrl  \frac{\nabla \ctrlA_{n+1,k}^{(j-1-i)}}{(j-1-i)!}\innerprod{A_k \frac{\wb_{n+1}^{(i)}}{i!}}{\lambdab} + \frac{\nabla \ctrlA_{n,k}^{(j-1-i)}}{(j-1-i)!}\innerprod{A_k \frac{\wb_{n}^{(i)}}{i!}}{\lambdab}\right)$
    \EndProcedure
\end{algorithmic}
\end{algorithm}

\begin{remark}
Applying $(L_{n,q})^T$ and $(R_{n,p})^T$ when solving the adjoint equations requires more matrix-vector multiplications than applying $L_{n,q}$ and $R_{n,p}$ in the forward evolution.
\end{remark}
Taking the transpose of \eqref{eq:linear_derivative_matrix}, we get
\begin{equation} \label{eq:linear_derivative_matrix_transpose}
    (D_{n,0})^T = I,\quad (D_{n,j+1})^T = \sum_{i=0}^j  \binom{j}{i} (D_{n,i})^T \lpar\stepDeriv{A}{n}{j-i}\rpar^T  ,\quad j=0,\dots,q-1.
\end{equation}
Whereas the computation of $D_{n,j+1} \xb$ can be done efficiently by storing and reusing $D_{n,i} \xb$ for $i=1,\dots,j$, in the computation of $(D_{n,j+1})^T \xb$ the vector being multiplied by each $(D_{n,i})^T$ changes with the values of $i$ and $j$. The vectors $\stepDeriv{A}{n}{i}\xb$, $i=0,\dots,j$ may be computed once and and reused. When this is done, the number of matrix-vector multiplications needed to multiply a vector by $(D_{n,j+1})^T$ is
\begin{equation*}
\cost[(D_{n,j+1})^T] = 1 + \sum_{i=0}^j \cost[(D_{n,j})^T],\quad j=1,\dots,q,
\end{equation*}
and the number of matrix-vector multiplications needed to multiply a vector by
$L_{n,q}$ or $R_{n,p}$ is
\begin{equation*}
    \cost[(L_{n,q})^T] = \sum_{i=0}^{q-1} \cost[(D_{n,j})^T],\quad 
    \cost[(R_{n,p})^T] = \sum_{i=0}^{p-1} \cost[(D_{n,j})^T].
\end{equation*}
Algorithm \ref{alg:adjoint_evolution} gives a recursive method for applying $(L_{n,q})^T$ and $(R_{n,p})^T$ when solving the adjoint equations.

A comparison of the number of matrix-vector multiplications required in the application of $L_{n,q}$ or $R_{n,p}$ (as in the forward evolution) and in the application of $(L_{n,q})^T$ or $(R_{n,p})^T$ (as in the adjoint evolution) when $q=p$ is given in Table \ref{tab:num_matrix_vec_multiplications}. The number of matrix-vector multiplications required in the adjoint evolution scales exponentially with the order of the method, limiting the efficiency of the extremely high-order methods. In the rest of this work, we consider method orders twelve and under, for which the number of matrix-vector multiplications required in the adjoint evolution does not exceed three times the number required in the forward evolution.

\begin{table}[htb]
\centering
\begin{tabular}{lcccccccc}
\toprule 
 & \multicolumn{8}{c}{Order of Method} \\
\cmidrule(lr){2-9} 
& 2 & 4 & 6 & 8 & 10 & 12 & 14 & 16\\
\hline
\# Matvec Mults: Forward & 1 & 3 & 6 & 10 & 15 & 21 & 28 & 36 \\
\# Matvec Mults: Adjoint & 1 & 3 & 7 & 15 & 31 & 63 & 127 & 255 \\
\bottomrule 
\end{tabular}
\caption{Number of matrix-vector multiplications required in the application of $L_{n,q}$ and $R_{n,p}$ (as in the forward evolution) and in the application of $(L_{n,q})^T$ and $(R_{n,p})^T$ (as in the adjoint evolution) when $q=p$.}
\label{tab:num_matrix_vec_multiplications}
\end{table}

\begin{algorithm}[htb!]
    \caption{Applying $-(L_{n,q})^T$ (or $(R_{n,p})^T$) to $\lambdab_n$.}
    \label{alg:adjoint_evolution}
\begin{algorithmic}
    \State For the system of ODEs $\dv{\wb}{t} = A(t;\thetab)\wb$, given $\lambdab_n$, the adjoint state corresponding to the $n$-th timestep, compute $-(L_{n,q})^T\lambdab_n$ (or $(R_{n,p})^T\lambdab_n$), as required when solving the teriminal condition \eqref{eq:terminal_condition} or adjoint equations \eqref{eq:adjoint_equations}.
    \State
    \State Precompute $\ \frac{1}{(j-1)!}(A_n^{(j-1)})^T\lambdab_n$ for $j=1,\dots,q$.
    \State \textbf{return } $\sum_{j=1}^q (-1)^j c_j^{qp} \Delta t^j \Call{DT}{\lambdab_n, j}$ \Comment{To apply $R_{n,p}$ instead of $L_{n,q}$, swap $p$ and $q$ and remove the $(-1)^j$ factor.}
    \State
    \Procedure{DT}{$\xb$, $j$} $\longrightarrow \frac{1}{j!}(D_{n,j})^T \xb$
        \If{$j = 0$}
            \State \textbf{return } $\xb$
        \Else \Comment{(When \textproc{DT} is not called recursively, then $\xb = \lambdab_n$ and $\frac{1}{(j-i)!}(A_n^{(j-i)})^T \xb$ is precomputed.)}
            \State \textbf{return } $\frac{1}{j}\sum_{i=0}^{j-1} \Call{DT}{\frac{1}{(j-i)!}(A_n^{(j-i)})^T \xb,i}$ 
        \EndIf
    \EndProcedure
\end{algorithmic}
\end{algorithm}

\begin{remark}
    Because the terminal condition \eqref{eq:terminal_condition} and adjoint equations \eqref{eq:adjoint_equations} require computing $(\inlinepdv{\jfunc_n}{\wb_n})^T$ for $n=\NTime,\dots,1$, it is necessary to either store the entire state history $\wb_0,\dots,\wb_\NTime$ when solving the forward equations, or to solve \Schrodinger's equation backward in time while solving the adjoint equations, so that we only every store a single state vector or adjoint state vector at each timestep. We refer to the latter approach as the ``memory-lean'' version. In this case, the gradient accumulation \eqref{eq:gradient_accumulation} is also performed alongside the adjoint evolution.
\end{remark}

There are two main drawbacks to the memory-lean version. 1) We must solve an additional system of ODEs, increasing the cost of our method by roughly 50\%. 2) Because the Hermite Runge-Kutta methods we use are non-reversible, the memory-lean version computed gradient will no longer be exact with respect to the discretized forward equations. \revision[The latter drawback can be especially detrimental to the performance of quasi-Newton optimizers.]{The latter drawback can be especially detrimental, depending on the magnitude of the error introduced by the memory-lean version, since the gradient becomes smaller as the objective function is minimized. As the length of the gradient decreases, eventually the approximation error dominates the computed value of the gradient. When using the GRAPE approach with an inexact gradient formula, de Fouquieres et al. \cite{de_Fouquieres_2011} observed that the convergence of the optimization slows down dramatically compared to the convergence when using an exact gradient as the optimization reaches high fidelities (the threshold at which the slowdown occurs increases with the accuracy of the gradient), effectively imposing a maximum fidelity achievable by the optimization algorithm.}

As a compromise, a checkpointing approach \cite{GriewankCheckpoint} could be used to store the state vector at only a \revision[subset of all timesteps and still computing an accurate gradient with respect to the discretized forward equations.]{fraction of all the timesteps but still compute an exact gradient. In order to recover the states between the checkpoints, the forward equations must be solved a second time while solving the adjoint equations. Similar to the memory-lean version, this increases the cost of the method by roughly 50\%. In the numerical examples that follow, we always store the entire state history.}

\subsection{Preconditioning}
We find that an efficient left preconditioner for the solution of the linear system \eqref{eq:timestep_matrices} is to apply $(I - \tilde{L}_{n+1,q})^{-1}$, where $\tilde{L}_{n+1,q}$ is the operator $L_{n+1,q}$ in the case where the time-dependent Hamiltonian $A(t)$ is replaced with the time-independent drift Hamiltonian $A_d$ (i.e. $A(t)$ when the control pulse amplitudes are all zero). Typically, the control pulse amplitudes are experimentally required to remain below a value which is significantly smaller than the eigenvalues of the drift Hamiltonian. Combined with the fact that $L_{n+1,q}$ scales with $\Delta t$, this suggests $L_{n+1,q}$ is only a small perturbation of $\tilde{L}_{n+1,q}$ for reasonably small $\Delta t$. When the drift Hamiltonian is diagonal, $(I - \tilde{L}_{n+1,q})^{-1}$ can be applied in $\mathcal{O}(N)$ time using Gaussian elimination, without ever forming $(I-\tilde{L}_{n+1,q})^{-1}$ explicitly. In this work, we consider only problems with diagonal drift Hamiltonians, and use the Gaussian elimination approach to preconditioning.

\section{Numerical Experiments}
\subsection{Correctness Experiments on a Rabi Oscillator Problem} \label{sec:correctness_experiments}
To verify the correctness of our numerical method, we  simulate a Rabi oscillator problem, which has an \revision[analytic]{analytical} solution. A Rabi oscillator problem consists of a two-level system in a rotating frame of reference. For this problem, \Schrodinger's equation is
\begin{equation}
\label{eq:rabi_osc_schrodinger}
    \dv{\psib}{t} = -i H_c \psib, \quad\text{where }
    H_c = \Omega a + \overline{\Omega} a^\dag = \re \Omega (a + a^\dagger) + \im \Omega (a - a^\dagger), \quad \Omega \in \C.
\end{equation}
The analytic solution to \eqref{eq:rabi_osc_schrodinger} is $\psib(t) = U(t)\psib_0$, where the time-evolution unitary is given by
\revision[
\begin{equation} \label{eq:rabi_solution}
U(t) = 
\begin{bmatrix}
  \cos(|\Omega| t) &   (\sin(\theta) - i\cos(\theta)) \sin(|\Omega| t)\\
  -(\sin(\theta) + i\cos(\theta)) \sin(|\Omega| t) &   \cos(|\Omega| t)
\end{bmatrix},\quad\text{where } \Omega = |\Omega|(\cos(\theta)+i\sin(\theta)).
\end{equation}
]{
\begin{equation} \label{eq:rabi_solution}
U(t) = 
\begin{bmatrix}
  \cos(|\Omega| t) &   (\sin(\varphi) - i\cos(\varphi)) \sin(|\Omega| t)\\
  -(\sin(\varphi) + i\cos(\varphi)) \sin(|\Omega| t) &   \cos(|\Omega| t)
\end{bmatrix},\quad\text{where } \Omega = |\Omega|(\cos(\varphi)+i\sin(\varphi)).
\end{equation}
}
This solution can be derived using an eigenvector decomposition of the Hamiltonian \cite[Sec. 7.23]{petersson2021quantumDictionary}. The populations of the $\ket{0}$ and $\ket{1}$ states oscillate between 0 and 1 with period $\tau = \pi / |\Omega|$, \revisionB[and hence an application of control pulses $p_0 = \re(\Omega)$ and $q_0 = \im(\Omega)$ for duration $\tau$ is called a Rabi oscillation. A control pulse with $p_0=|\Omega|$, $q_0 = 0$, and duration $\tau/2$ implements a Pauli-$X$ gate.]{ a phenomenon known as Rabi oscillation.}

\subsubsection{Verification of Order of Accuracy}
Using the \revision[analytic]{analytical} solution \eqref{eq:rabi_solution}, we may evaluate the error in a numerical solution of the Rabi oscillator problem \eqref{eq:rabi_osc_schrodinger} at any point in time for any initial condition. In Table \ref{tab:rabi_osc}, we plot the Frobenius norm of the relative error in the final-time numerical solution of the Rabi oscillator problem for a basis of initial states $U_0 = \diag(1,1)$, with $\Omega = 0.05(\cos(\pi/4)+i\sin(\pi/4))$ (constant control pulses with amplitude $25/\sqrt{2}$ MHz), and $T = 9.5\pi / |\Omega|$ (nine and a half Rabi oscillations), starting with sixteen timesteps and then doubling the number of timesteps until we reach 256 timesteps. We observe that the method converges to the \revision[analytic]{analytical} solution with the expected orders of accuracy. Furthermore, we can simulate the problem with high accuracy using far fewer timesteps with the high-order methods than with the second-order method.

\begin{table}[htb!]
\centering
\begin{tabular}{rllllllllllll}
\toprule 
    & \multicolumn{2}{c}{Order 2} & \multicolumn{2}{c}{Order 4} & \multicolumn{2}{c}{Order 6} & \multicolumn{2}{c}{Order 8} & \multicolumn{2}{c}{Order 10} & \multicolumn{2}{c}{Order 12}\\
\cmidrule(lr){2-3} \cmidrule(lr){4-5} \cmidrule(lr){6-7} \cmidrule(lr){8-9} \cmidrule(lr){10-11} \cmidrule(lr){12-13}
Steps & Err & Cvg & Err & Cvg & Err & Cvg & Err & Cvg & Err & Cvg & Err & Cvg \\
\midrule 
16 & 4.5(-1) & - & 4.0(-1) & - & 1.1(-2) & - & 1.6(-4) & - & 1.4(-6) & - & 8.6(-9) & - \\
32 & 1.6 & 1.9 & 3.0(-2) & 3.7 & 1.9(-4) & 5.8 & 6.6(-7) & 7.9 & 1.4(-9) & 9.9 & 2.2(-12) & 11.9 \\
64 & 5.2(-1) & 1.7 & 1.9(-3) & 3.9 & 3.0(-6) & 6.0 & 2.6(-9) & 8.0 & 1.4(-12) & 10.0 & 3.4(-15) & 9.3 \\
128 & 1.3(-1) & 2.0 & 1.2(-4) & 4.0 & 4.7(-8) & 6.0 & 1.0(-11) & 8.0 & 1.5(-15) & 9.9 & 2.2(-15) & 0.7 \\
256 & 3.4(-2) & 2.0 & 7.7(-6) & 4.0 & 7.4(-10) & 6.0 & 3.5(-14) & 8.2 & 1.4(-15) & 0.1 & 4.6(-15) & 1.1 \\
\bottomrule 
\end{tabular}
    \caption{Frobenius norms and convergence rates of the final-time relative error in the numerical solution of nine and a half periods of Rabi oscillation, using the Hermite method with orders two through twelve.}
\label{tab:rabi_osc}
\end{table}

\subsubsection{Verification of Exact Gradients}
We can use the Rabi oscillator in an optimal control problem by taking $\thetab = [\revision[p_0, q_0]{\theta_1, \theta_2}]^T$ and $\Omega = \revision[p_0 + iq_0]{\theta_1 + i \theta_2}$, so that $H_c = \revision[p_0]{\theta_1}(a + a^\dag) + \revision[q_0]{\theta_2}(a - a^\dag)$, and assigning some target gate $U^{\textrm{target}} \in \C^{2\times2}$. \revisionB{For example, taking $\theta_1 = |\Omega|,$ $\theta_2 = 0$, and using a gate duration of $\tau/2 = \pi/(2|\Omega|)$ implements a Pauli-$X$ gate.}

We can calculate the derivatives of each entry of $U(t)$ with respect to \revision[$p_0$ and $q_0$]{$\theta_1$ and $\theta_2$} by applying straightforward calculus to \eqref{eq:rabi_solution}:
\revision[
\begin{gather}
\label{eq:rabi_p_sensitivity}
\pdv{U(t)}{p_0} = \begin{bmatrix}
    -\frac{p_0 t}{|\Omega|}\sin(|\Omega|t)
    & \frac{-1}{p_0 - i q_0}\lpar \frac{q_0}{|\Omega|}\sin(|\Omega|t) + ip_0t\cos(|\Omega|t) \rpar 
    \\
\frac{1}{p_0+iq_0}\lpar \frac{q_0}{|\Omega|}\sin(|\Omega|t) - ip_0t\cos(|\Omega|t) \rpar
    &  -\frac{p_0 t}{|\Omega|}\sin(|\Omega|t)
\end{bmatrix},
\\
\label{eq:rabi_q_sensitivity}
\pdv{U(t)}{q_0} = \begin{bmatrix}
    -\frac{q_0 t}{|\Omega|}\sin(|\Omega|t)
    & \frac{1}{q_0 + i p_0}\lpar q_0t\cos(|\Omega|t) + \frac{i p_0}{|\Omega|}\sin(|\Omega|t) \rpar
    \\
    \frac{-1}{p_0+iq_0}\lpar \frac{p_0}{|\Omega|}\sin(|\Omega|t) + iq_0t\cos(|\Omega|t)\rpar
    &  -\frac{q_0 t}{|\Omega|}\sin(|\Omega|t)
\end{bmatrix}.
\end{gather}
]{
\begin{gather}
\label{eq:rabi_p_sensitivity}
\pdv{U(t)}{\theta_1} = \begin{bmatrix}
    -\frac{\theta_1 t}{|\Omega|}\sin(|\Omega|t)
    & \frac{-1}{\theta_1 - i \theta_2}\lpar \frac{\theta_2}{|\Omega|}\sin(|\Omega|t) + i\theta_1t\cos(|\Omega|t) \rpar 
    \\
\frac{1}{\theta_1+i\theta_2}\lpar \frac{\theta_2}{|\Omega|}\sin(|\Omega|t) - i\theta_1t\cos(|\Omega|t) \rpar
    &  -\frac{\theta_1 t}{|\Omega|}\sin(|\Omega|t)
\end{bmatrix},
\\
\label{eq:rabi_q_sensitivity}
\pdv{U(t)}{\theta_2} = \begin{bmatrix}
    -\frac{\theta_2 t}{|\Omega|}\sin(|\Omega|t)
    & \frac{1}{\theta_2 + i \theta_1}\lpar \theta_2t\cos(|\Omega|t) + \frac{i \theta_1}{|\Omega|}\sin(|\Omega|t) \rpar
    \\
    \frac{-1}{\theta_1+i\theta_2}\lpar \frac{\theta_1}{|\Omega|}\sin(|\Omega|t) + i\theta_2t\cos(|\Omega|t)\rpar
    &  -\frac{\theta_2 t}{|\Omega|}\sin(|\Omega|t)
\end{bmatrix}.
\end{gather}
}
For two matrices $A, B \in \C^{n \times n}$, $\tr(A^\dag B)$ can be rewritten as
\begin{equation*}
    \tr(A^\dag B) = \innerprod{A_\re}{B_\re}_F + \innerprod{A_\im}{B_\im}_F 
    + i \left[ \innerprod{A_\re}{B_\im}_F  - \innerprod{A_\im}{B_\re}_F\right].
\end{equation*}
The infidelity can be written as
\begin{equation*}
    \Infidelity = 1 - \frac{1}{4} \left[
        \left( \innerprod{U^{\textrm{target}}_\re}{U(T)_\re}_F + \innerprod{U^{\textrm{target}}_\im}{U(T)_\im}_F\right)^2 
    +  \left( \innerprod{U^{\textrm{target}}_\re}{U(T)_\im}_F  - \innerprod{U^{\textrm{target}}_\im}{U(T)_\re}_F\right)^2
    \right],
\end{equation*}
and then the derivatives of $\Infidelity$ may be computed as
\revision[
\begin{multline*}
    \dv{\Infidelity}{p_0/q_0} = 
    -\frac{1}{2} \bigg( \innerprod{U^{\textrm{target}}_\re}{U_{T,\re}}_F + \innerprod{U^{\textrm{target}}_\im}{U_{T,\im}}_F\bigg)
        \lpar \innerprod{U^{\textrm{target}}_\re}{\dv{U_{T,\re}}{p_0/q_0}}_F + \innerprod{U^{\textrm{target}}_\im}{\dv{U_{T,\im}}{p_0/q_0}}_F \rpar \\
    -\frac{1}{2}
    \bigg( \innerprod{U^{\textrm{target}}_\re}{U_{T,\im}}_F  - \innerprod{U^{\textrm{target}}_\im}{U_{T,\re}}_F\bigg)
    \left( \innerprod{U^{\textrm{target}}_\re}{\dv{U_{T,\im}}{p_0/q_0}}_F  - \innerprod{U^{\textrm{target}}_\im}{\dv{U_{T,\re}}{p_0/q_0}}_F\right),
\end{multline*}
]{
\begin{multline*}
    \dv{\Infidelity}{\theta_1/\theta_2} = 
    -\frac{1}{2} \bigg( \innerprod{U^{\textrm{target}}_\re}{U_{T,\re}}_F + \innerprod{U^{\textrm{target}}_\im}{U_{T,\im}}_F\bigg)
        \lpar \innerprod{U^{\textrm{target}}_\re}{\dv{U_{T,\re}}{\theta_1/\theta_2}}_F + \innerprod{U^{\textrm{target}}_\im}{\dv{U_{T,\im}}{\theta_1/\theta_2}}_F \rpar \\
    -\frac{1}{2}
    \bigg( \innerprod{U^{\textrm{target}}_\re}{U_{T,\im}}_F  - \innerprod{U^{\textrm{target}}_\im}{U_{T,\re}}_F\bigg)
    \left( \innerprod{U^{\textrm{target}}_\re}{\dv{U_{T,\im}}{\theta_1/\theta_2}}_F  - \innerprod{U^{\textrm{target}}_\im}{\dv{U_{T,\re}}{\theta_1/\theta_2}}_F\right),
\end{multline*}
}
where the \revision[sensitivites $\inlinedv{U_{T,\re/\im}}{p_0}$ and $\inlinedv{U_{T,\re/\im}}{q_0}$]{sensitivities $\inlinedv{U_{T,\re/\im}}{\theta_1}$ and $\inlinedv{U_{T,\re/\im}}{\theta_2}$} may be computed explicitly using \eqref{eq:rabi_p_sensitivity} and \eqref{eq:rabi_q_sensitivity}.

To test the correctness of our discrete adjoint implementation, we take the Hadamard gate as our target gate and again use \revision[$\Omega = p_0+iq_0 = 0.05(\cos(\pi/4)+i\sin(\pi/4))$ and $T = 9.5\pi / |\Omega|$]{
$\Omega = \theta_1+i\theta_2 = 0.05(\cos(\pi/4)+i\sin(\pi/4))$ and $T = 9.5\pi / |\Omega|$}. We compute the gradient using the discrete adjoint method for 16 to 256 timesteps and method order 2 through 12, and compare the Frobenius norm of the difference between the analytic gradient of the continuous problem and the gradient of the discrete problem using the discrete adjoint method. The results are shown in Table \ref{tab:rabi_gradient}. We observe that as the \revisionC[stepsize]{number of timesteps} increases, the gradient converges to the analytic gradient with machine precision.

\begin{table}[htb!]
\centering
\begin{tabular}{rllllllllllll}
\toprule 
    & \multicolumn{2}{c}{Order 2} & \multicolumn{2}{c}{Order 4} & \multicolumn{2}{c}{Order 6} & \multicolumn{2}{c}{Order 8} & \multicolumn{2}{c}{Order 10} & \multicolumn{2}{c}{Order 12}\\
\cmidrule(lr){2-3} \cmidrule(lr){4-5} \cmidrule(lr){6-7} \cmidrule(lr){8-9} \cmidrule(lr){10-11} \cmidrule(lr){12-13}
Steps & Err & Cvg & Err & Cvg & Err & Cvg & Err & Cvg & Err & Cvg & Err & Cvg \\
\midrule 
16 & 6.3 & - & 1.0(1) & - & 3.2(-1) & - & 4.6(-3) & - & 4.2(-5) & - & 2.6(-7) & - \\
32 & 7.9 & 0.3 & 8.8(-1) & 3.5 & 5.6(-3) & 5.8 & 2.0(-5) & 7.9 & 4.3(-8) & 9.9 & 6.6(-11) & 11.9 \\
64 & 1.2(1) & 0.6 & 5.8(-2) & 3.9 & 9.0(-5) & 6.0 & 7.8(-8) & 8.0 & 4.3(-11) & 10.0 & 8.3(-14) & 9.6 \\
128 & 3.9 & 1.7 & 3.6(-3) & 4.0 & 1.4(-6) & 6.0 & 3.1(-10) & 8.0 & 4.1(-15) & 13.3 & 4.9(-14) & 0.8 \\
256 & 1.0 & 2.0 & 2.3(-4) & 4.0 & 2.2(-8) & 6.0 & 1.1(-12) & 8.1 & 7.1(-14) & 4.1 & 1.3(-13) & 1.4 \\
\bottomrule 
\end{tabular}
    \caption{$\ell_2$ norms and convergence rates of the error in the discrete-adjoint-computed gradient of the infidelity for the Rabi oscillator problem with the Hadamard gate as the target. Numerical solution done over nine and a half periods of Rabi oscillation, using the Hermite method with orders two through twelve.}
\label{tab:rabi_gradient}
\end{table}

\subsection{Speedup Experiments on a Multi-Qudit Gate Design Problem} \label{sec:speedup_experiments}
As a practical example, we consider two qudits coupled to a resonator bus. Using the model \eqref{eq:dispersive_hamiltonian_model}, the drift Hamiltonian is 
\begin{equation} \label{eq:cnot3_hamiltonian}
\begin{multlined}
    H_d = \omega_1 a_1^\dag a_1 + \omega_2 a_2^\dag a_2  + \omega_R a_R^\dag a_R
        -\frac{\xi_1}{2} a_1^\dag a_1^\dag a_1 a_1 -\frac{\xi_2}{2}  a_2^\dag a_2^\dag a_2 a_2 -\frac{\xi_R}{2} a_R^\dag a_R^\dag a_R a_R \\
        - \xi_{21} a_2^\dag a_2 a_1^\dag a_1 - \xi_{R1}  a_R^\dag a_R a_1^\dag  a_1 - \xi_{R2} a_R^\dag a_R a_2^\dag a_2.
\end{multlined}
\end{equation}
In the model, the resonator is treated no differently than another qudit, but we use the index $R$ to indicate its significance as a resonator: the two qudits used for computation are weakly coupled to one another, but more strongly coupled to the resonator, which allows the resonator to mediate the interaction between the two qudits. For the physical parameters, we use the values from the CNOT3 example of Juqbox.jl \cite{petersson2022optimal}, which we provide in Table \ref{tab:cnot3_physical_parameters}.

\begin{table}[tb!]
\begin{subtable}[htb!]{\linewidth}
\centering
\begin{tabular}{cccccccccc} 
    \toprule
      & $\omega_1$ & $\omega_2$ & $\omega_R$ & $\xi_1$ & $\xi_2$ & $\xi_R$ & $\xi_{21}$ & $\xi_{R1}$  & $\xi_{R2}$  \\
     \midrule
    Value$ / 2\pi$ (GHz) & 4.11 & 4.82 & 7.84 & 2.20(-1) & 2.25(-1) & 2.83(-5) & 1.00(-6) & 2.49(-3) & 2.52(-3) \\
    \bottomrule
\end{tabular}
% Vertical table version, with full precision
%           $\omega_a$ & 4.10595 \\
%           $\omega_b$ & 4.81526 \\
%           $\omega_s$ & 7.8447 \\
%           $\xi_a$ & 2.198e-01 \\
%           $\xi_b$ & 2.252e-01 \\
%           $\xi_s$ & 2.8299e-05 \\
%           $\xi_{ab}$ & 1.0e-06 \\
%           $\xi_{as}$ & 2.494e-03 \\
%           $\xi_{bs}$ & 2.52445e-03 \\
\caption{Physical parameters, rounded to three significant digits. The qubits are weakly coupled to one another, but more strongly coupled to the resonator.}
\label{tab:cnot3_physical_parameters}
\end{subtable}
\vspace{1em}

\begin{subtable}[htb!]{\linewidth}
\centering
\begin{tabular}{lccc}
  \toprule
    & Resonator & Qudit $1$ & Qudit $2$ \\
    \midrule
    \# Essential Levels & 1 & 2 & 2 \\
    \# Guard Levels     & 9 & 2 & 2 \\
  \bottomrule
\end{tabular}
\caption{Number of essential and guard levels for each subsystem.}
    \label{tab:subsystem_levels}
\end{subtable}
\caption{Parameters used to model the Hamiltonian of two qudits coupled to a resonator bus in \eqref{eq:cnot3_hamiltonian}.}
\end{table}

In the rotating frame (in which we will perform all numerical calculations), the drift Hamiltonian becomes 
\begin{equation*}
    H_d = -\frac{\xi_1}{2} a_1^\dag a_1^\dag a_1 a_1 -\frac{\xi_2}{2}  a_2^\dag a_2^\dag a_2 a_2 -\frac{\xi_R}{2} a_R^\dag a_R^\dag a_R a_R
        - \xi_{21} a_2^\dag a_2 a_1^\dag a_1 - \xi_{R1}  a_R^\dag a_R a_1^\dag  a_1 - \xi_{R2} a_R^\dag a_R a_2^\dag a_2,
\end{equation*}
and after making the rotating wave approximation, the control Hamiltonians are
\begin{equation*}
    H_{c,K}(t;\thetab) = p_K(t;\thetab)(a_K + a_K^\dag) + i q_K(t;\thetab)(a_K - a_K^\dag),\quad K \in \{1,2,R\}.
\end{equation*}

We model each of the two qubits as four-level subsystems \revisionC{(making them qu\emph{dits})}, and the resonator as a ten-level subsystem. We wish to implement a CNOT gate on the two qudits, utilizing the resonator to mediate the interaction. We use two essential levels and two guard levels for the qudits, and one essential level and nine guard levels for the resonator (see Table \ref{tab:subsystem_levels}). 
\revisionC{We also mimic Juqbox.jl's strategy of weighting the guard penalty for the guard states exponentially based on the distance from the highest level guard state. Let $N_G$ be the total number of guard levels in the computational basis (i.e., the dimension of the guard subspace), which is $N_G=156$ for this example. Let $N_{E,K}$ and $N_{G,K}$ be the number of essential and guard states in subsystem $K$ for $K \in \{1,2,R\}$. Then the weight of the guard penalty for the state $\ket{i_R i_2 i_1}$ is $w_G = \max\{w_{G,1}, w_{G,2}, w_{G,R}\}/N_G$, where $w_{G,K}$ are weights associated with the leakage into the guard subspace for each subsystem. They are defined by:
\[
w_{G,K} = \begin{cases}
0 & \textrm{if } i_K < N_{E,K},\\
0.001^{N_{E,K}+N_{G,K}-i_K-1} & \textrm{otherwise.}
\end{cases}
\]
For example, for the state $\ket{501}$ we get $w_{G,R} = 0.001^4$, $w_{G,1}=0$, and $w_{G,2}=0$, and therefore the weight of the guard penalty is $w_G = 0.001^4/156$. For the state $\ket{713}$ we get $w_{G,R} = 0.001^2,$ $w_{G,1} = 1$, and $w_{G,2}=0$, and therefore the weight of the guard penalty is $w_G = 1/156$. 

Weighting the guard levels in this way allows the optimization to populate the lower levels of the cavity, which may be advantageous for facilitating the interaction of the two qudits, while more heavily discouraging the population of the guard states which push the limits of the truncated Hilbert space used in the model.}

The gate is given in the laboratory frame in bra-ket notation by
\begin{equation*}
    \textrm{CNOT} = \ket{000}\bra{000} + \ket{001}\bra{001} + \ket{010}\bra{011} + \ket{011}\bra{010}.
\end{equation*}
Qudit 1 flips between the ground and excited states when qudit 2 is in the excited state. When qudit 2 is in the ground state, qudit 1 remains in the same state. The resonator always remains in the ground state (at the final time).

We fix the duration of the gate to 550 nanoseconds. As our control pulse ansatz, we use carrier waves multiplying degree 14 B-spline curves with 16 B-spline wavelets for the real and imaginary parts of the envelope, for a total of $3 \times 3 \times 2 \times 16 = 288$ control parameters. For each control Hamiltonian corresponding to a subsystem, we use three carrier wave frequencies: $0$, $-2\pi\xi_1$, and $-2\pi\xi_2$ for qudits $1$ and $2$, and $0$, $-2\pi\xi_{R1}$, and $-2\pi\xi_{R2}$ for the resonator. The B-spline degree of 14 ensures that the Hamiltonian is smooth enough that the methods up to order 12 will exhibit their full order of accuracy in the asymptotic region. For the lower-order methods, we could lower the degree of the B-spline curves to reduce the cost of evaluating the control functions while keeping the full order of accuracy, but we use the same degree for all orders for the sake of keeping the comparison as direct as possible.

\subsubsection{Determining Appropriate Step Size} \label{subsec:cnot3_stepsize}
In order for the optimization to find control pulses which implement a gate with low infidelity, the numerical solution of \Schrodinger's equation must be accurate enough to ensure that the error in the infidelity is reasonably smaller than the infidelity itself. With this in mind, before performing the optimization using the Hermite \revision[method of order $N$]{method of a given order,} we perform a numerical experiment to determine an appropriate stepsize $\Delta t$ to use in order to reach a target relative error in the numerical solution of \Schrodinger's at the final time of the gate.

For each order of the method, starting with two timesteps and doubling the number of timesteps each iteration until the state history cannot fit into main memory, we perform a forward evolution, record the final state, perform the rest of the gradient computation (adjoint evolution and gradient accumulation), and record the time taken for the full gradient computation. We do this for 25 samples of the control vector $\thetab$ from a uniform distribution with a range of $[-0.05,0.05]$ for each control parameter, which enforces a maximum amplitude of 50 MHz on the B-spline envelope for each carrier wave for each control pulse. For each order of the method, the same 25 samples are used, so that each method is attempting to simulate the exact same analytic dynamics. An absolute tolerance of $10^{-15}$ was used for the linear solves, which were performed using GMRES. This tolerance could be relaxed for greater computational efficiency, depending on the accuracy required of the state vector. In Figure \ref{fig:cnot3_stepsize}, we report the number of timesteps, mean time to compute the gradient, and mean relative error at the final time for each order of the method. The samples were collected using a single core of an Intel\textsuperscript{\textregistered} Xeon\textsuperscript{\textregistered} Gold 6148 CPU clocked at 2.40 GHz, and 32 GB of memory, in Michigan State University's high-performance computing cluster.

\begin{figure}[htb!]
    \centering
    \includegraphics[width=6.25in, height=4.5in]{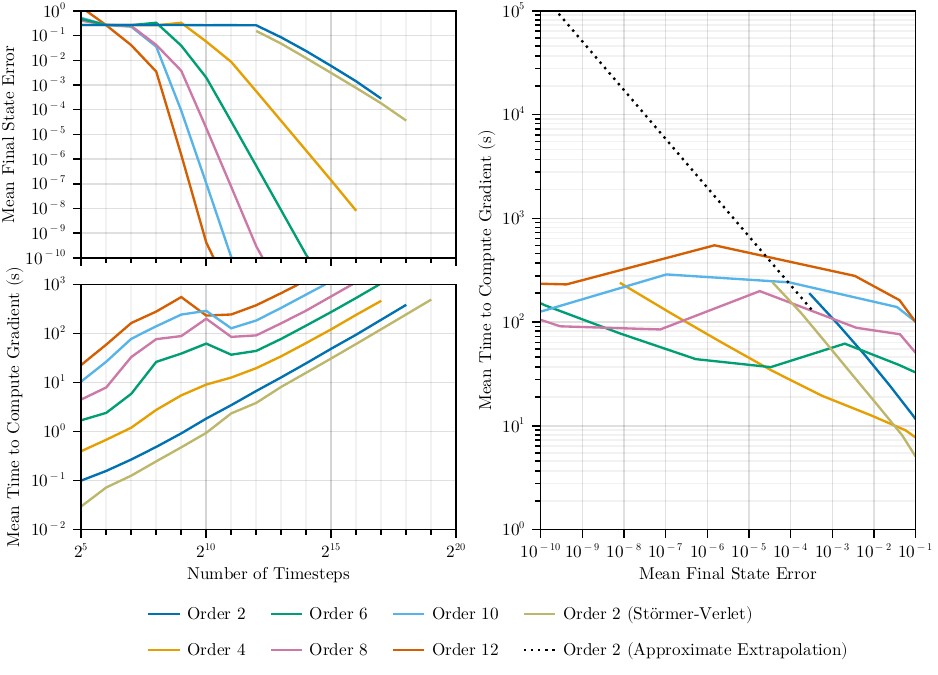} 
    \caption{Comparison of the number of timesteps used, the time taken to compute the gradient, and the relative error in the numerical approximation of the state, for the three subsystem example, starting in the initial states of the gate design problem: $\ket{000}, \ket{001}, \ket{010}$, and $\ket{011}$. The HOHO method is used with orders 2, 4, 6, 8, 10, and 12. We also compare with the second-order method St\"ormer-Verlet, as implemented in the Julia package Juqbox.jl.}
    \label{fig:cnot3_stepsize}
\end{figure}

In order to estimate the number of timesteps needed to reach a target relative error, we use linear interpolation of the number of timesteps and the mean relative error in the case that we have recorded mean relative errors above and below the target relative error. Otherwise, we estimate the number of timesteps using a least squares fit of the number of timesteps and the mean relative error in the region of asymptotic convergence.

This heuristic approach warrants some skepticism that we can use a single stepsize in order to consistently achieve a relative error below the target relative error during the optimization, since the control vector (and hence the dynamics) changes with each iteration of the optimization. However, in our experiment we observe that the standard deviation of the relative error is always an order of magnitude smaller than the mean relative error, indicating that the magnitude of the relative error is consistent for different values of $\thetab$. We use this numerical observation to justify using a single stepsize during the optimization. Table \ref{tab:cnot3_stepsize} shows the number of timesteps required for to reach several target relative final-time errors for each order of the Hermite method, and Table \ref{tab:cnot3_stepsize_speedup} shows the expected speedup of the gradient computation (to perform forward evolution, adjoint evolution, and gradient accumulation) for each order of the HOHO method compared to St\"ormer-Verlet, as implemented by Juqbox.jl, based on the average time taken to compute the gradient over the 25 different control vectors.

\begin{table}[htb!]
\begin{subtable}[htb!]{\linewidth}
\centering
\begin{tabular}{crrrrrrr}
\toprule 
& \multicolumn{7}{c}{Number of Timesteps Required} \\
\cmidrule(lr){2-8}
Target State Error & Order 2 (SV) & Order 2 & Order 4 & Order 6 & Order 8 & Order 10 & Order 12 \\
\midrule
1.0(-1) & 5,287 & 7,401 & 822 & 378 & 182 & 175 & 92 \\
%1.0(-2) & 18,125 & 25,138 & 1,939 & 704 & 387 & 297 & 190 \\
1.0(-3) & 57,051 & 75,775 & 3,530 & 1,151 & 609 & 388 & 286 \\
%1.0(-4) & 168,841 & 242,628 & 6,293 & 1,700 & 821 & 507 & 352 \\
1.0(-5) & 540,005 & 778,948 & 11,198 & 2,505 & 1,104 & 642 & 432 \\
%1.0(-6) & 1,697,278 & 2,500,785 & 19,911 & 3,682 & 1,476 & 813 & 530 \\
1.0(-7) & 5,334,675 & 8,028,679 & 35,328 & 5,409 & 1,974 & 1,029 & 644 \\
\bottomrule 
\end{tabular}
\caption{Number of timesteps required to reach a target relative final-time error.}
\label{tab:cnot3_stepsize}
\end{subtable}
\vspace{1em}

\begin{subtable}[htb!]{\linewidth}
\centering
\begin{tabular}{crrrrrrr}
\toprule 
& \multicolumn{7}{c}{Speedup Factor} \\
\cmidrule(lr){2-8}
Target State Error & Order 2 (SV) & Order 2 & Order 4 & Order 6 & Order 8 & Order 10 & Order 12 \\
\midrule
1.0(-1) & 1.0 & 0.4 & 0.7 & 0.2 & 0.1 & 0.1 & 0.1 \\
%1.0(-2) & 1.0 & 0.5 & 1.4 & 0.4 & 0.2 & 0.1 & 0.1 \\
1.0(-3) & 1.0 & 0.5 & 3.0 & 0.9 & 0.5 & 0.3 & 0.2 \\
%1.0(-4) & 1.0 & 0.5 & 5.7 & 3.7 & 1.0 & 0.7 & 0.4 \\
1.0(-5) & 1.0 & 0.5 & 11.0 & 12.8 & 2.7 & 1.9 & 1.1 \\
%1.0(-6) & 1.0 & 0.5 & 20.3 & 36.2 & 12.2 & 5.7 & 2.9 \\
1.0(-7) & 1.0 & 0.5 & 36.9 & 87.3 & 54.2 & 16.8 & 11.7 \\
\bottomrule 
\end{tabular}
\caption{Speedup factor (relative to St\"ormer-Verlet) for each order of the Hermite method.}
\label{tab:cnot3_stepsize_speedup}
\end{subtable}
\caption{\revision{The number of timesteps required and speedup factors observed when using St\"ormer-Verlet and each order of the Hermite method to numerically solve \Schrodinger's equation to a target relative final-time error for the three subsystem example, starting in the initial states of the gate design problem: $\ket{000}, \ket{001}, \ket{010}$, and $\ket{011}$. Results are based on the average accuracy and elapsed time for 25 randomly sampled control vectors.}}
\end{table}

In Tables \ref{tab:timesteps_excited} and \ref{tab:speedup_excited}, we again show the number of timesteps required and the average speedup for the three subsystem example, but for the evolution of a single initial state: $\ket{933}$, the most highly excited state of the system, for which the dynamics are expected to happen over much shorter timescales. This represents a worst-case scenario computationally, as more timesteps are required to resolve the faster dynamics, and consequently the higher-order methods are much more efficient, since their accuracy increases more significantly with the number of timesteps. Fast dynamics could also occur even in the computational subspace when there are many qudits, for example, or if the computation is done in the laboratory frame instead of the rotating frame.

More analytic approaches for determining the number of timesteps can be used, such as an analysis of the smallest frequency of the dynamics based on the eignevalues of the Hamiltonian \cite{petersson2022optimal}, but this numerical experiment is simple and captures the effect of changing the control vector on the numerical accuracy, resulting in a robust estimate of the number of timesteps needed to reach a particular level of accuracy. And although the experiment requires performing dozens of gradient computations, it is still cheap compared to the control optimization, which may perform hundreds or thousands of gradient computations.

\begin{table}[htb!]
\begin{subtable}[htb!]{\textwidth}
\centering
\begin{tabular}{crrrrrrr}
\toprule 
& \multicolumn{7}{c}{Number of Timesteps Required} \\
\cmidrule(lr){2-8}
Target State Error & Order 2 (SV) & Order 2 & Order 4 & Order 6 & Order 8 & Order 10 & Order 12 \\
\midrule
1.0(-1) & 232,401 & 311,607 & 14,684 & 4,495 & 2,292 & 1,424 & 1,041 \\
%1.0(-2) & 686,370 & 840,493 & 26,196 & 6,627 & 3,079 & 1,843 & 1,273 \\
1.0(-3) & 2,004,693 & 2,267,052 & 46,606 & 9,755 & 4,136 & 2,351 & 1,556 \\
%1.0(-4) & 5,915,052 & 6,114,893 & 82,863 & 14,334 & 5,525 & 2,971 & 1,902 \\
1.0(-5) & 17,452,973 & 16,493,625 & 146,904 & 21,050 & 7,381 & 3,754 & 2,316 \\
%1.0(-6) & 51,496,800 & 44,488,051 & 258,042 & 30,905 & 9,851 & 4,735 & 2,811 \\
1.0(-7) & 151,946,628 & 119,997,072 & 463,615 & 45,367 & 13,143 & 5,967 & 3,413 \\
\bottomrule 
\end{tabular}
\caption{Number of timesteps required to reach a target relative final-time error.}
\label{tab:timesteps_excited}
\end{subtable}
\vspace{1em}

\begin{subtable}[htb!]{\linewidth}
\centering
\begin{tabular}{crrrrrrr}
\toprule 
& \multicolumn{7}{c}{Speedup Factor} \\
\cmidrule(lr){2-8}
Target State Error & Order 2 (SV) & Order 2 & Order 4 & Order 6 & Order 8 & Order 10 & Order 12 \\
\midrule
1.0(-1) & 1.0 & 0.7 & 5.0 & 5.5 & 1.2 & 0.9 & 0.8 \\
%1.0(-2) & 1.0 & 0.8 & 8.9 & 12.4 & 5.6 & 2.9 & 2.2 \\
1.0(-3) & 1.0 & 0.9 & 15.4 & 26.9 & 25.1 & 9.3 & 6.3 \\
%1.0(-4) & 1.0 & 1.0 & 26.4 & 57.2 & 59.9 & 30.8 & 18.0 \\
1.0(-5) & 1.0 & 1.2 & 45.1 & 121.4 & 142.9 & 101.4 & 52.5 \\
%1.0(-6) & 1.0 & 1.3 & 75.8 & 257.4 & 334.4 & 280.9 & 155.3 \\
1.0(-7) & 1.0 & 1.5 & 136.7 & 530.1 & 774.6 & 702.4 & 459.1 \\
\bottomrule 
\end{tabular}
\caption{Speedup factor (relative to St\"ormer-Verlet) for each order of the Hermite method.}
\label{tab:speedup_excited}
\end{subtable}
\caption{\revision{The number of timesteps required and speedup factors observed when using St\"ormer-Verlet and each order of the Hermite method to numerically solve \Schrodinger's equation to a target relative final-time error for the three subsystem example, starting in the most highly excited state: $\ket{933}$. Results are based on the average accuracy and elapsed time for 25 randomly sampled control vectors.}}
\label{tab:excited}
\end{table}

\subsubsection{Performing Gate Optimization}
We now perform the gate optimization. We compare the performance of each order of the Hermite method for target final-time relative errors of $10^{-1}$, $10^{-3}$, $10^{-5}$, and $10^{-7}$. The optimization is managed by IPOPT (Interior Point OPTimizer) \cite{Wachter2006-li}, a software library for solving large scale nonlinear optimization problems using a primal-dual interior point method. We configure IPOPT to perform a limited-memory quasi-Newton approximation of the Hessian of the objective function for use in line searches (second-order optimization). IPOPT also allows for box constraints on the control vector, which we use to enforce our constraint on the control pulse amplitudes.

For each order and target error, we perform the optimization from ten initial control vectors, sampled from a uniform random distribution with a range of $[-0.005,0.005]$. The upper and lower bounds on each control parameter (enforced by IPOPT) are set to $\pm0.05$, enforcing a maximum amplitude of 50 MHz on the B-spline envelope of each carrier wave for each control pulse, the same as the upper and lower limits on the random distribution in the stepsize experiment. The same ten initial control vectors are used for all orders and all target errors, so that every optimization starts from the same analytic control pulses. Each optimization is allowed to run for six hours before being halted.

In Figure \ref{fig:cnot3_optimization}, for each target error we plot the generalized gate infidelity against the number of IPOPT iterations completed for each order of the Hermite method. In the figure, we do not plot the ``watchdog'' iterations, in which the optimizer aggressively explores new search directions, resulting in large changes in the objective function which would make the graph difficult to interpret (but the watchdog iterations are still counted in the total). These samples were collected using the same setup as the stepsize test, except we use four CPU cores instead of one and use multithreading to parallelize the forward and adjoint evolutions over the different initial and terminal conditions. In Figure \ref{fig:cnot3_bestcontrol}, we plot the time evolution of the population of the computational basis states for the control found that best minimized the generalized gate infidelity for a target final state error of $10^{-7}$\revision{, which were found using the sixth-order Hermite method with 5,409 timesteps, resulting in a stepsize of $\Delta t = 0.102$ nanoseconds (rounded to three significant digits)}.

\revision{Although the state populations in Figure \ref{fig:cnot3_bestcontrol} appear slowly varying, the dynamics of the \emph{probability amplitudes} (the complex-valued components of our state vector) exhibit highly oscillatory behavior. This can be seen in Figure \ref{fig:cnot3_bestcontrol_complex}, where we plot the time evolution of the probability amplitudes for one of the initial conditions of the gate design problem when using the ``best'' control. The fast oscillation of the probability amplitudes accounts for the computational difficulty of numerically solving \Schrodinger's equation for this problem.}

\begin{figure}[htb!]
    \centering
    \includegraphics[width=6.25in, height=3.25in]{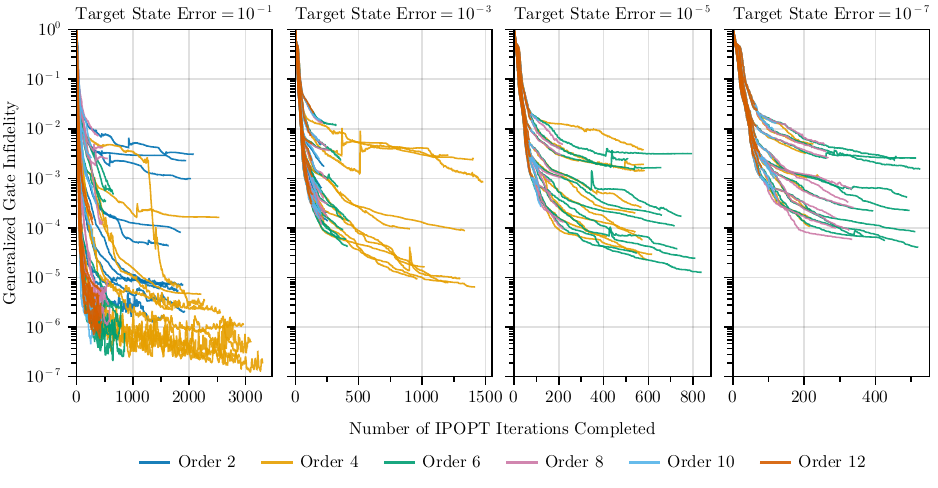}
    \caption{Optimization of a CNOT gate using two qudits coupled to a resonator bus as the mode, using B-spline curves multiplied carrier waves as the control pulse ansatz, using the methods and numbers of timesteps prescribed by Table \ref{tab:cnot3_stepsize} in order to reach target relative final state errors on the orders of $10^{-1}$, $10^{-3}$, $10^{-5}$, and $10^{-7}$. \revision{We do not include the second-order method for target errors of $10^{-5}$ and $10^{-7}$ due to the memory needed to store the state history for the large number of timesteps required. The second-order method could be used with checkpointing or the ``memory-lean'' approach, but it is clear that at these levels of accuracy the higher-order methods are significantly faster.}}
    \label{fig:cnot3_optimization}
\end{figure}

\begin{figure}[htb!]
    \centering
    \includegraphics[width=6.25in, height=5.5in]{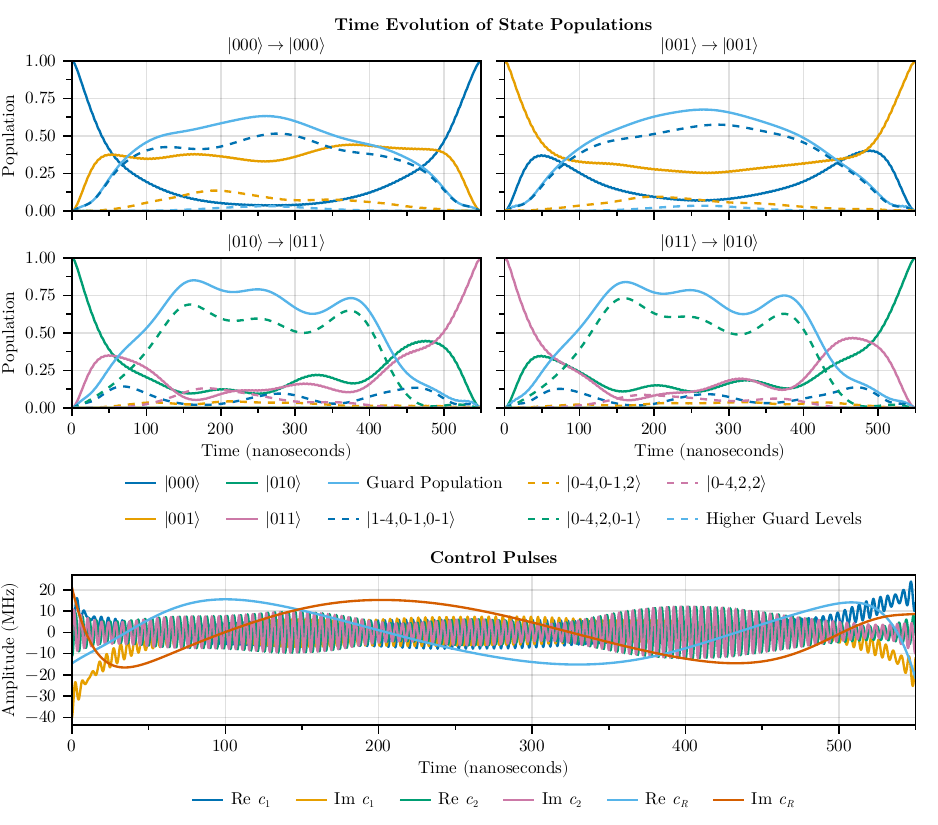}
    \caption{Time evolution of the state populations for the CNOT gate implemented by the best (lowest generalized infidelity) control pulses found by the optimizations performed with a target relative final state error of $10^{-7}$ for the two qudits plus resonator model (see Figure \ref{fig:cnot3_optimization}). \revision{The results shown here were obtained using the sixth-order Hermite method with 5,409 timesteps ($\Delta t \approx 0.102$ nanoseconds), which is the order and number of timesteps used in the optimization which found this control pulse.}}
    \label{fig:cnot3_bestcontrol}
\end{figure}

\begin{figure}[htb!]
    \centering
    \includegraphics[width=6.25in, height=4.5in]{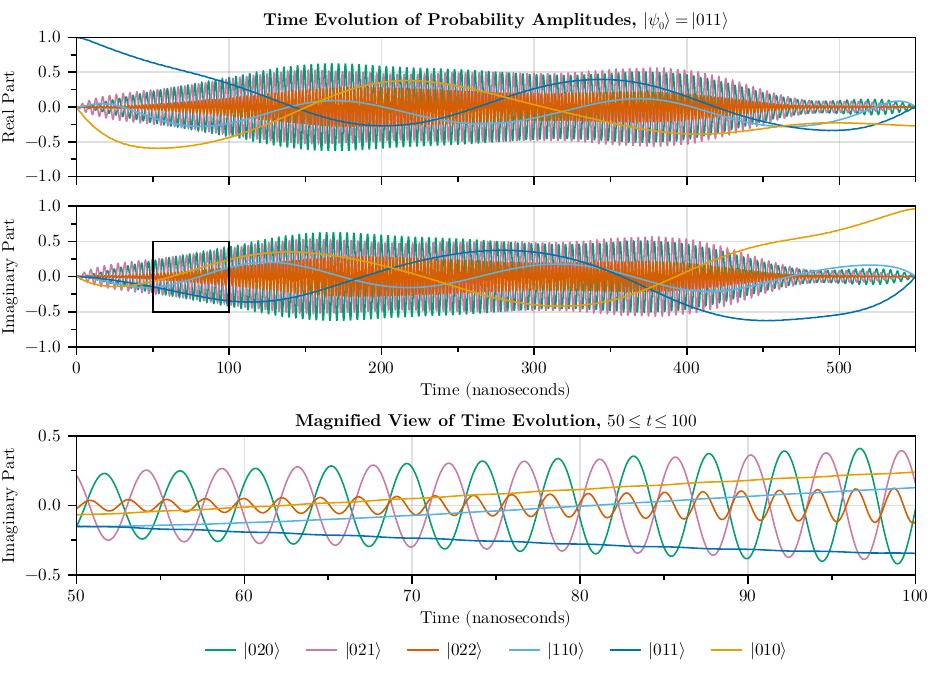}
    \caption{\revision{Time evolution of the complex-valued probability amplitudes of the six states with the highest population when starting from the $\ket{011}$ state and applying the ``best'' control pulse for implementing a CNOT gate in the two qudits plus resonator model (see Figure \ref{fig:cnot3_bestcontrol}). The results shown here were obtained using the sixth-order Hermite method with 5,409 timesteps ($\Delta t \approx 0.102$ nanoseconds), which is the order and number of timesteps used in the optimization which found this control pulse.}}
    \label{fig:cnot3_bestcontrol_complex}
\end{figure}

\subsubsection{Analysis}
From the plot of the mean relative error against the number of timesteps for the three subsystem example in Figure \ref{fig:cnot3_stepsize}, we see that each order of the Hermite method exhibits the full order of convergence. From the plot of the mean gradient computation time against the number of timesteps, we can see that the computation time increases with the number of timesteps, and the asymptotic behavior of each order of the method is the same, indicating that the computation time of the higher-order methods is roughly a constant multiple of the computation time of the lower-order methods, which is to be expected. Interestingly, for the sixth through twelfth-order methods the computation time \emph{decreases} around $2^9$--$2^{11}$ timesteps, for which the stepsize ranges from $1.07 \geq \Delta t \geq 0.27$ nanoseconds. We attribute this to a decrease in the number of GMRES iterations required in the linear solves of the forward and adjoint evolutions due to the effectiveness of our preconditioner and our initial guess (using Taylor expansion) when $\Delta t < 1$ nanosecond.

Most importantly, the plot of the mean gradient computation time against the mean relative error can be used to estimate which method order will be the fastest for a particular target error. For example, for a target error of $10^{-3}$, the gradient computation is fastest for the fourth-order method, but for a target error of $10^{-6}$ the sixth-order method is fastest. This also demonstrates the effectiveness of high-order methods; for a target error less than $10^{-2}$, the methods of order four and higher are always faster than the second-order methods. 

The computational efficiency of the high-order methods is showcased by Tables \ref{tab:cnot3_stepsize_speedup} and \ref{tab:speedup_excited}. When using the initial conditions from the gate design problem and trying to reach final state errors of $10^{-3}$--$10^{-5}$, using high-order methods results in average speedup factors of approximately $3$--$13$ compared to St\"ormer-Verlet (when choosing the most efficient order of the method for each target error). When the initial condition is the highly excited state $\ket{933}$, the average speedup factors are in the range $27$--$143$!

In addition to prescribing the number of timesteps required to reach a target error, Tables \ref{tab:cnot3_stepsize} and \ref{tab:timesteps_excited} also highlight another advantage of the high-order methods: reduced memory requirements. The memory required to store the state history in the forward and adjoint evolutions scales linearly in the number of timesteps and the dimension of Hilbert space. Because the high-order methods can obtain highly accurate solutions with a small number of timesteps, we are often able to store the entire state history in memory. This allows us to avoid using the ``memory-lean'' version of the gradient calculation, which would require more computation and produce less accurate gradients.

Figure \ref{fig:cnot3_optimization} highlights the effectiveness of high-order methods for the optimization process. We observe that for different order methods and different target errors, but starting from the same initial control vector, the generalized gate infidelity evolves roughly the same as a function of the number of IPOPT iterations. So our ability to optimize the objective function is determined mainly by the number of IPOPT iterations we are able to perform. Consequently, a method that can compute the gradient faster and therefore perform more IPOPT iterations in a fixed amount of wall time can converge to a lower objective function value. Because the higher-order methods can typically compute the gradient faster, they are able to optimize the objective function to a lower value in the same amount of time. As the target error decreases, the higher-order methods become more effective.

\section{Conclusions and Future Work} \label{sec:conclusions}
In this work, we derived a discrete adjoint method for efficiently computing gradients of objective functionals depending on the numerical solution of a linear system of ODEs solved by Hermite Runge-Kutta methods. Furthermore, we presented techniques for improving the computational efficiency of the method when the time-dependence of the Hamiltonian takes the typical form for systems controlled by control pulses, along with an efficient preconditioner for models where the drift Hamiltonian is diagonal.

We tested our method on the problem of designing a CNOT gate for two qudits coupled to a resonator bus, with guard levels included to ensure the validity of the model. We observed significant improvement in time and memory usage when using Hermite Runge-Kutta methods with the discrete adjoint method derived in this work (as implemented by QuantumGateDesign.jl) and comparing to the St\"ormer-Verlet method (as implemented by Juqbox.jl). When computing gradients to moderate levels of accuracy (relative final state error $10^{-3}$-$10^{-5}$) for the ``worst-case'' scenario that the system starts in its most highly excited state, we observed speedup factors of 27-143 and memory reduction factors of approximately $1,200$-$7,500$ by using high-order Hermite Runge-Kutta methods.

For a more strict relative final state error requirement of approximately $10^{-7}$, a speedup factor of 775 and a memory reduction factor of 11,561 was achieved by the most time-efficient method, and a speedup factor of 460 and a memory reduction factor of 44,520 was achieved by the most memory-efficient method. For stiffer systems, we expect high-order methods to be even more effective.

\revision{We also note the existence of \emph{quantum speed limits (QSL)}. The QSL defines the minimum evolution time it takes for a quantum system to dynamically evolve between two distinguishable quantum states \cite{Deffner_2017, Caneva_2009, Giovannetti_2003}. A related concept is the \emph{minimum control time (MCT)}, the minimum evolution time for which the optimization of the control pulses may (in principle) be successful, according to some metric, such as the infidelity reaching a certain value \cite{Poggi_2019}. In other words, given a parameterized Hamiltonian $H(t;\thetab)$ and a desired gate, the gate has a minimum duration. When the coupling between qubits is weak (e.g. small cross-Kerr values) and the control pulses are amplitude-bounded due to experimental constraints, the QSL and MCT for a given gate can be large. Another advantage of our method is that high-order methods can accurately simulate over long periods of time, whereas a low-order method would be more prone to drifting from the correct solution over time. Therefore, a particularly useful application of our method would be for solving quantum optimal control problems with a large time duration imposed by the quantum speed limit or minimum control time.}

\section*{Source Code Availability}
Source code for QuantumGateDesign.jl and scripts for running and plotting simulations in this paper are available as a GitHub
release for QuantumGateDesign.jl: \url{https://doi.org/10.5281/zenodo.17614033.}
 
\section*{Acknowledgments}
This material is based upon work supported by the National Science Foundation Graduate Research Fellowship Program under Grant No 2235783 (SL) and by NSF under Grant Number 2436319 (DA). Any opinions, findings, and conclusions or recommendations expressed in this material are those of the author(s) and do not necessarily reflect the views of the National Science Foundation. Part of this research was performed while one of the authors (SL) was visiting the Institute for Pure and Applied Mathematics (IPAM), which is supported by the National Science Foundation (Grant No. DMS-1925919). This material is based upon work supported by the U.S. Department of Energy, Office of Science, Advanced Scientific Computing Research (ASCR), under Award Number DE-SC0025424 (DA). This report was prepared as an account of work sponsored by an agency of the United States Government. Neither the United States Government nor any agency thereof, nor any of their employees, makes any warranty, express or implied, or assumes any legal liability or responsibility for the accuracy, completeness, or usefulness of any information, apparatus, product, or process disclosed, or represents that its use would not infringe privately owned rights. Reference herein to any specific commercial product, process, or service by trade name, trademark, manufacturer, or otherwise does not necessarily constitute or imply its endorsement, recommendation, or favoring by the United States Government or any agency thereof. The views and opinions of authors expressed herein do not necessarily state or reflect those of the United States Government or any agency thereof. We acknowledge Michigan State University's support through computational resources provided by the Institute for Cyber-Enabled Research. Thanks to the creators of the Julia programming language \cite{Julia_2017} and of the packages used in the creation of this paper, including Makie.jl \cite{Makie}, DataFrames.jl \cite{DataFrames}, and others which do not explicitly request citations in academic literature.

\bibliographystyle{elsarticle-num} 
\bibliography{bibliography}

\end{document}